\numberwithin{equation}{section}
\newtheorem{thm}{Theorem}[section]
\newtheorem{lem}[thm]{Lemma}
\newtheorem{prop}[thm]{Proposition}
\newtheorem{cor}[thm]{Corollary}
\theoremstyle{definition}
\newtheorem{defn}[thm]{Definition}
\theoremstyle{remark}
\newtheorem{rmk}[thm]{Remark}
\newtheorem{ex}[thm]{Example}
\newtheorem{notn}[thm]{Notation}
\newtheorem{term}[thm]{Terminology}
\newcommand \del{\partial}
\newcommand \vp{\varphi}
\newcommand \ve{\varepsilon}
\newcommand\im{ \text{Im}\, }
\newcommand\map{\operatorname{Map}}
\newcommand\id{\text{Id}}
\newcommand\N{\mathbb N}
\newcommand\R{\mathbb R}
\newcommand \Om{\Omega}
\newcommand\op{\mathcal}
\newcommand\cat{\mathsf}
\newcommand\K{\op K}
\renewcommand\L{\mathfrak L}
\newcommand\sset{\cat{sSet}}
\newcommand\M{\cat M}
\newcommand\bimod[1]{\cat{Bimod}_{#1}}
\renewcommand\mod[1]{\cat{Mod}_{#1}}
\newcommand\maph{\operatorname{Map}^{\mathrm h}}
\newcommand\seq{\cat{Seq}}
\newcommand\Tot{\operatorname{Tot}}
\begin{document}
\title {A delooping of the space of string links}

\author{William  Dwyer}
\address{Department of Mathematics, University of Notre Dame, Notre
  Dame, IN 46556, USA}
\email{dwyer.1@nd.edu}
\author {Kathryn Hess}
\address{MATHGEOM\\
    Ecole Polytechnique F\'ed\'erale de Lausanne \\
    CH-1015 Lausanne \\
    Switzerland}
    \email{kathryn.hess@epfl.ch}

\date \today

 \keywords {Operad, string link, configuration space} 
 \subjclass [2000] {Primary:  57Q45 Secondary: 18D50, 18G55, 55P48}
 \begin{abstract} We exhibit an explicit delooping of the space of string links with $m$ components  in the hypercube $I^{n}$, for $n\geq 4$ and $m\geq 1$.
 \end{abstract}
  \thanks{This material is based upon work partially supported by the National Science Foundation under Grant No.~0932078000 while the authors were in residence at the Mathematical Sciences Research Institute in Berkeley, California, during the Spring 2014 semester. Visits by the first author to the EPFL were also supported during this project by the Swiss National Science Foundation, Grant No. 200020\_144393.}

 \maketitle

 \tableofcontents


\section{Introduction}

The goal of this article is to describe the homotopy type of the following space in terms of operads and their bimodules.

\begin{defn} Let $m$ and $n$ be positive integers, and let $I=[0,1]$.  Let $I^{+m}$ denote the disjoint union of $m$ copies of  $I$ and $I ^{n}$ the product of $n$ copies of $I$.  Fix an embedding $e:I^{+m} \to I^{n}$ such that $e^{-1}(\del I^{n})=\del I^{+m}$, and $\im (e)$ meets $I^{n}$ transversally. 

The \emph{space of string links with $m$ components } in $I^{n}$, denoted $\L_{m,n}$, is the homotopy fiber with respect to the baspoint $e$ of the inclusion
$$\operatorname{Emb}_{\del}\big(I^{+m} , I^{n}\big) \hookrightarrow \operatorname{Imm}_{\del}\big(I^{+m}, I^{n}\big),$$
where the domain and codomain are, respectively, the space of embeddings and the space of immersions of $I^{+m}$ in $I^{n}$ that agree with $e$ near $\del I^{+m}$.
\end{defn}

In \cite{munson-volic}, Munson and Voli\'c constructed a cosimplicial space, the totalization of which has the homotopy type of  $\L_{m,n}$, generalizing Sinha's cosimplicial model for the space of long knots, $\L_{1,n}$ \cite{sinha}.  We showed in  \cite{dwyer-hess} that the totalization of Sinha's cosimplicial model has the homotopy type of the double loops on a certain derived mapping space of operad maps.  Here we prove for $m>1$ that $\L_{m,n}$ is equivalent to the single loops on a derived mapping space of maps of bimodules over the associative operad.

The functor defined below, though quite simpleminded, plays an essential role in our operadic model for spaces of string links. It was introduced in \cite{dwyer-hess:divpower}, where it was a key element of the construction of a right adjoint to the lift of the Boardman-Vogt tensor product to operadic bimodules.  

\begin{defn}\label{defn:divpow} The \emph{$m^{\text{th}}$-divided powers functor} on the category $\seq(\cat C)=\cat C^{\N}$ of sequences in a category $\cat C$
$$\gamma^{}_{m}: \seq(\cat C) \to \seq(\cat C),$$ 
is defined on objects by
$$\gamma^{}_{m}(\op X)(k)=\op X(km)^{},$$
and extended to morphisms in the obvious way
\end{defn}

\begin{term} If $\op X$ is a sequence in a category $\cat C$, then the object $\op X(n)$ in $\cat C$ is called the \emph{arity $n$} part of $\op X$, for any $n\in \N$.
\end{term}

\begin{ex} If $\op A$ denotes the nonsymmetric associative operad in topological spaces, then $\op A \cong \gamma_{m}\op A$ as sequences, since $\op A(k)$ is a singleton for all $k$.
\end{ex}

Let $\K_{n}$ denote the \emph{$n^{\text{th}}$ Kontsevich operad} of \cite[\S 4] {sinha}, a symmetric operad in pointed topological spaces, which we view as nonsymmetric by forgetting its symmetric structure. The operad map $\op A\to \K_{n}$ determined by the basepoint in each arity induces the structure of an $\op A$-bimodule on $\K_{n}$.  By \cite{mcclure-smith} (see also \cite[Definition 2.17]{sinha}), associated to any $\op A$-bimodule $\op X$ in topological spaces, there is a cosimplicial space $\op X^{\bullet}$ with the same underlying graded space, where the coface maps are defined using the induced $\op A$-bimodule structure.  Sinha proved that 
$$\widetilde{\operatorname{Tot}}\,\op K_{n}^{\bullet}\simeq \L_{1,n}$$
for all $n>3$, where $\widetilde{\operatorname{Tot}}$ denotes homotopy-invariant totalization \cite[Corollary 1.2]{sinha}.  

The first step towards our delooping of $\L_{m,n}$ is a generalization of Sinha's result, based on work of Munson and Voli\'c \cite[Proposition 5.10]{munson-volic}.

\begin{thm}\label{thm:divpowmodel}  For any $m\geq 1$, the sequence $\gamma_{m}\op K_{n}$ admits the structure of an $\op A$-bimodule such that   
$$\widetilde{\operatorname{Tot}}(\gamma_{m}\op K_{n})^{\bullet}\simeq \L_{m,n}$$
for all $n>3$, where $(\gamma_{m}\op K_{n})^{\bullet}$ denotes the associated cosimplicial space.
Moreover, there is a morphism of $\op A$-bimodules $\alpha_{m}: \gamma_{m}\op K_{n}\to \op K_{n}^{\times m}$ that corresponds after totalization to the map
$$ \L_{m,n}\to \L_{1,n}^{\times m}$$
that sends a string link to the list of its components.
\end{thm}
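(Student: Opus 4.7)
The plan is to define the $\op A$-bimodule structure on $\gamma_m \K_n$ so that its associated cosimplicial space coincides with the Munson--Voli\'c cosimplicial model for $\L_{m,n}$, and then to define $\alpha_m$ as a product of configuration-space projections. The main technical obstacle is verifying the combinatorial dictionary between the bimodule-derived coface maps on $(\gamma_m \K_n)^{\bullet}$ and the synchronous-doubling coface maps of the Munson--Voli\'c model; the rest is formal once this matching is in place.

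\textbf{Step 1: Bimodule structure.} Since $\op A(k)$ is a singleton for every $k$, specifying an $\op A$-bimodule structure on $\gamma_m \K_n$ reduces to giving compatible left and right stuttering maps. For the left action I would use the operad composition of $\K_n$, composing the distinguished element of $\K_n(k)$ (arising from the basepoint operad map $\op A \to \K_n$) with elements of $\K_n(j_1 m), \ldots, \K_n(j_k m)$ to land in $\K_n((j_1+\cdots+j_k)m)$. For the right action in slot $i$ of $\gamma_m \K_n(k) = \K_n(km)$, I would act diagonally on the $i$-th block of $m$ consecutive positions in $\K_n(km)$ via the native right $\op A$-action on $\K_n$; concretely, $\op A(j_i)$ acts on this block by inserting the basepoint element of $\K_n(j_i)$ at each of those $m$ positions, so that the total arity increases from $km$ to $(j_1 + \cdots + j_k)m$. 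The $\op A$-bimodule axioms for $\gamma_m\K_n$ then follow from the operad axioms for $\K_n$.

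\textbf{Step 2: Identification with Munson--Voli\'c.} The cosimplicial space $(\gamma_m \K_n)^{\bullet}$ has, in cosimplicial degree $k$, the space $\K_n(km)$. Its internal $i$-th coface map is the right action of the basepoint of $\op A(2)$ in slot $i$, which by construction simultaneously doubles the $i$-th block of $m$ points in $\K_n(km)$---precisely the synchronous-doubling coface map in the Munson--Voli\'c model \cite[Proposition 5.10]{munson-volic}. The external cofaces and the codegeneracies match analogously via $\op A(0)$ and $\op A(1)$. The equivalence $\widetilde{\Tot}(\gamma_m \K_n)^{\bullet} \simeq \L_{m,n}$ for $n>3$ then follows from Munson and Voli\'c's theorem.

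\textbf{Step 3: The map $\alpha_m$.} I would define $\alpha_m(k) \colon \K_n(km) \to \K_n(k)^{\times m}$ as the product, over $j = 1, \ldots, m$, of the forgetful maps that retain the $j$-th point within each of the $k$ blocks. Since forgetting points commutes with operad composition in $\K_n$, this is a map of left $\op A$-modules, and compatibility with the right action is exactly what the diagonal structure on $\gamma_m \K_n$ is designed to guarantee: the simultaneous action on a block of $m$ projects on each coordinate to the usual right $\op A$-action on $\K_n(k)$. Under totalization, $\alpha_m$ therefore induces the map $\L_{m,n} \to \L_{1,n}^{\times m}$ recording each component, since in the Munson--Voli\'c/Sinha dictionary this projection corresponds to restricting a string link to its $j$-th strand.
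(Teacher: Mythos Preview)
There is a genuine gap in Step~1, and it undermines Step~3. Your right action on $\gamma_m\K_n(k)=\K_n(km)$ inserts the basepoint of $\K_n(j_i)$ at each of the $m$ consecutive positions $(i-1)m+1,\ldots,im$ via the native operad composition of $\K_n$. But native composition expands each such position into $j_i$ \emph{consecutive} new positions, which scrambles the strand structure: after the action, the $r$-th point of each new block is no longer the image of the $r$-th point of an old block. Concretely, take $m=2$, $k=2$, and apply your $d^1$ (i.e.\ $j_1=2$, $j_2=1$). Old position~$1$ expands to new positions $1,2$, old~$2$ to new $3,4$, old~$3$ to new~$5$, old~$4$ to new~$6$. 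Now $\alpha_2(d^1 f)_1(1,2)=(d^1f)(1,3)=f(1,2)$, while $d^1(\alpha_2 f)_1(1,2)=\mathfrak s$. So $\alpha_m$ is \emph{not} a bimodule map for your structure, contrary to the claim in Step~3 that ``the simultaneous action on a block of $m$ projects on each coordinate to the usual right $\op A$-action on $\K_n(k)$.'' The paper avoids this by defining a different, \emph{row-wise} $\op A$-bimodule structure on $\gamma_m\op B$ (hence on $\gamma_m\Phi_{S^{n-1}}(\op B)$): the right action applies the native $\op B$-action strand by strand and sends every cross-strand pair to the basepoint. This is exactly what is needed to make $\alpha_m$ equivariant, but it is no longer just a restriction of $\K_n$'s own composition, so one must verify separately that it preserves three-dependence and four-consistency and hence restricts to $\gamma_m\K_n$; this is the substantial computation in the paper's Section~3.2.

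A second, smaller gap: Munson--Voli\'c's theorem (Proposition~5.10 in \cite{munson-volic}, Theorem~\ref{thm:munson-volic} here) concerns the compactified-configuration cosimplicial space $C_{\partial}\langle\bullet\cdot m, I^n\rangle$, not $(\gamma_m\K_n)^\bullet$ directly. Even for $m=1$, Sinha needs a zigzag $C_{\partial}\langle\bullet,I^n\rangle\leftarrow C_{\partial,\varepsilon}\langle\bullet,I^n\rangle\to \K_n^\bullet$ to pass between them (Remark~\ref{rmk:zigzag}). The paper's proof of Theorem~\ref{thm:divpowmodel} extends this zigzag to general~$m$ and checks that the row-wise coface maps on $(\gamma_m\K_n)^\bullet$ match those of the Munson--Voli\'c model under it; your Step~2 simply identifies the two cosimplicial spaces, which is not correct on the nose.
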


\begin{rmk}\label{rmk:no-operad}  If $m>1$, there is no operad structure on $\gamma_{m}\K_{n}$.  Indeed, if $\gamma_{m}\K_{n}$ did admit an operad structure, then its arity 1 piece, $\gamma_{m}\K_{n}(1)=\K_{n}(m)$, would be an associative topological monoid.  Since $\K_{n}(m)$ has the homotopy type of the configuration space of $m$ points in $\R^{n}$ by \cite[Theorem 4.9]{sinha}, it cannot be a topological monoid, as can be seen from homology computations of the loops on a configuration space; see \cite{cohen} for a very nice survey. 
\end{rmk}

Combining Theorem \ref{thm:divpowmodel} with results on existence of fiber sequences from \cite {dwyer-hess}, we obtain the desired description of $\L_{m,n}$ as a loop space with an explicit delooping.

\begin{thm}\label{thm:mainthm} For all $n\geq 4$ and all $m\geq 1$,  
$$\L_{m.n}\simeq \Om\maph_{\bimod{\op A}} (\op A, \gamma_{m}\K_{n})_{\vp_{m}},$$ 
where $\maph_{\bimod{\op A}}$ denotes the derived topological mapping space of  $\op A$-bimodule maps, and $\vp_{m}:\op A \to \gamma_{m}\K_{n}$ is the $\op A$-bimodule map determined by the basepoint in each arity.
\end{thm}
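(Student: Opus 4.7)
The plan is to combine Theorem \ref{thm:divpowmodel} with a general bimodule delooping principle established in \cite{dwyer-hess}. Theorem \ref{thm:divpowmodel} already identifies $\L_{m,n}\simeq\widetilde{\Tot}(\gamma_m\K_n)^\bullet$ for $n\geq 4$, so the problem reduces to a purely operadic assertion: for any $\op A$-bimodule $\op X$ in pointed spaces equipped with a basepoint morphism $\vp:\op A\to\op X$ (subject to suitable cofibrancy and connectivity hypotheses), there is a natural equivalence
$$\widetilde{\Tot}\,\op X^\bullet \;\simeq\; \Om\,\maph_{\bimod{\op A}}(\op A,\op X)_\vp.$$
Applied to $(\op X,\vp)=(\gamma_m\K_n,\vp_m)$, this immediately yields the theorem.

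For the general delooping equivalence itself, I would cofibrantly resolve $\op A$ as an $\op A$-bimodule by its two-sided bar construction $B(\op A,\op A,\op A)$. Because this resolution is a simplicial bimodule built from free pieces in each simplicial degree, the derived mapping space $\maph_{\bimod{\op A}}(\op A,\op X)$ is modelled by the totalization of a cosimplicial space whose cofaces are precisely the structure maps defining $\op X^\bullet$. The factor of $\Om$ arises because the mapping space carries extra data coming from the low-arity pieces of $\op X$, which fits into an endpoint fibration whose fiber is $\widetilde{\Tot}\,\op X^\bullet$ and whose base is contractible at the $\vp$-component once the appropriate connectivity is assumed; passing to the homotopy fiber then produces the based loop space.

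The main obstacle will be verifying these hypotheses for $(\gamma_m\K_n,\vp_m)$. The $\op A$-bimodule cofibrancy should follow from the construction of the bimodule structure in Theorem \ref{thm:divpowmodel} together with the fact that $\op A$ is a point in every arity. The connectivity required to contract the $\vp_m$-component of the endpoint base reduces, in our setting, to simple connectedness of $\gamma_m\K_n(1)=\K_n(m)$; since this space has the homotopy type of the configuration space of $m$ points in $\R^n$, it is simply connected for $n\geq 3$, and in particular the standing hypothesis $n\geq 4$ suffices. Once these inputs are in place, the equivalence of Theorem \ref{thm:mainthm} follows by chaining the two identifications above.
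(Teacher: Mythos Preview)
Your overall plan---reduce to Theorem \ref{thm:divpowmodel} and then prove a general identification $\widetilde{\Tot}\,\op X^\bullet\simeq\Om\,\maph_{\bimod{\op A}}(\op A,\op X)_\vp$---matches the paper's strategy. But the mechanism you describe for producing the loop is garbled, and the hypothesis you isolate is the wrong one.

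In the fiber sequence actually used (Corollary~\ref{cor:bimod-fiberseq}, coming from \cite[Theorem~3.11]{dwyer-hess}),
\[
\Om\,\maph_{\bimod{\op A}}(\op A,\op X)_\vp \;\longrightarrow\; \maph_{\bimod{\op A}^{\mathrm{gr}}}(\op A,\op X)\;\xrightarrow{\ \eta^*\ }\;\maph_{\seq}(\op I,\op X),
\]
the middle term---not the fiber---is what is identified with $\widetilde{\Tot}\,\op X^\bullet$ (via \cite[Lemmas~8.2,~8.3]{dwyer-hess}); here $\bimod{\op A}^{\mathrm{gr}}$ is the category of graded monoids in right $\op A$-modules, not $\bimod{\op A}$ itself. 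The base is $\maph_{\seq}(\op I,\op X)\simeq\op X(0)$, so the whole argument hinges on contractibility of $\op X(0)$. For $\op X=\gamma_m\K_n$ this is $\K_n(0)$, a single point. No connectivity of $\gamma_m\K_n(1)=\K_n(m)$ is used, and simple connectedness of $\K_n(m)$ would not make anything contractible in any case; your ``endpoint'' condition appears to be borrowed from the second delooping step for operads (which passes from $\bimod{\op A}$ to operad maps and does involve arity~$1$), a step that is unavailable here precisely because $\gamma_m\K_n$ is not an operad for $m>1$ (Remark~\ref{rmk:no-operad}).

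Two smaller points. First, your bar-resolution sketch computes $\maph_{\bimod{\op A}}(\op A,\op X)$, not the graded-bimodule mapping space; the two differ, and it is the latter that matches $\widetilde{\Tot}\,\op X^\bullet$. Second, no cofibrancy of $\gamma_m\K_n$ is needed: the paper passes to simplicial sets via $S_\bullet$, applies Corollary~\ref{cor:bimod-fiberseq} there, and then returns to spaces via Proposition~\ref{prop:simp-top} and geometric realization.
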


\begin{rmk} In both \cite{munson-volic} and \cite{songhafouo}, the description of $\L_{m,n}$ as the totalization of a cosimplicial space is applied to making (co)homology and homotopy computations for string link spaces using the associated Bousfield-Kan spectral sequence.  Knowing that $\L_{m,n}$ is a loop space should facilitate certain computations with this spectral sequence.
\end{rmk}

\subsection{Structure of this article}  
We begin in section \ref{sec:cosimplicial} by recalling the cosimplicial models of $\L_{m,n}$ due to Sinha and to Munson and Voli\'c, emphasizing both the geometric, configuration space viewpoint and the more combinatorial viewpoint, based on Sinha's choose-two operad $\op B$.

We prove Theorem \ref{thm:divpowmodel} in section \ref{sec:model}.  The key observation is that $\gamma_{m}\op B$ admits the structure of $\op A$-bimodule for all $m$ that induces in turn an appropriate $\op A$-bimodule structure on $\gamma_{m}\op K_{n}$.

Theorem \ref{thm:mainthm} is proved in section \ref{sec:fiberseq}, by applying the general theory developed in \cite{dwyer-hess} for associating a fiber sequence of derived mapping spaces to a morphism of monoids in a category endowed with appropriately compatible monoidal and model category structures.

\subsection{Notation and conventions}\label{notn-conv}
\begin{enumerate}
\item For any $n\geq 2$ and $1\leq k\leq n$, let 
$$I_{k,n}=\big\{\vec n=(n_{1},..., n_{k})\mid \sum_{i}n_{i}=n, n_{i}\geq 0 \;\forall i\big\}.$$
For all $\vec n\in I_{k,n}$, let
$$\vec n^{s}=\sum _{1\leq i<s} n_{i}\quad \forall\; 1<s\leq k+1,\quad\text{and}\quad \vec n^{1}=0.$$
\item Let $(\cat C, \otimes, I)$ be a cocomplete monoidal category, and let $\cat{Seq(C)}$ denote the category of ($\N$-graded) sequences in $\cat C$. The \emph{composition product} is the monoidal product $\circ: \cat{Seq(C)}\times \cat{Seq(C)} \to \cat{Seq(C)}$ defined by 
$$(\op X \circ \op Y)(n) = \coprod_{k\geq 1, \vec n \in I_{k,n}} \op X (k) \otimes \big(\op Y(n_{1})\otimes \cdots \otimes \op Y(n_{k})\big).$$
Operads in $\cat C$, the category of which we denote $\cat {Op (C)}$, are the monoids in $\cat {Seq(C)}$ with respect to the composition product.  All of the operads that we work with in this paper are uncolored and nonsymmetric, though they may in fact be endowed with natural symmetric structure that we are forgetting here.

If $\op P$ and $\op Q$ are operads, then $\bimod{(\op P, \op Q)}$ denotes the category of $(\op P, \op Q)$-bimodules, i.e., sequences that admit a left $\op P$-action and right $\op Q$-action that are compatible in the usual way.   If $\op P=\op Q$, we simplify notation somewhat and write $\bimod{\op P}$.
\item We denote the (nonsymmetric) associative operad in any monoidal category $(\cat C, \otimes, I)$ by $\op A$.  Recall that $\op A(n)=I$ for all $n$.  
\item The geometric realization of a simplicial set $K$ is denoted $|K|$, as usual.
\end{enumerate}

\section{Cosimplicial models of $\L_{m,n}$}\label{sec:cosimplicial}

We recall here the cosimplicial model of the space of string links, first elaborated by Sinha for $m=1$ \cite{sinha}, then generalized by Munson and Voli\'c to all $m\geq 1$ \cite{munson-volic}.

\subsection{The choose-two operad}\label{sec:choose-two}
We first recall the ``choose-two'' operad from \cite[\S 3]{sinha} as it underlies $\K_{n}$ for every $n$.
Let  $\big(\cat {Set}_{*}^{\mathrm{op}}, \vee, \{+\}\big)$ denote the opposite of the category of pointed sets, with monoidal product given by the wedge.  Recall also the notation introduced under item (1) of section \ref{notn-conv}.

\begin{defn}  The underlying graded pointed set of the \emph{choose-two operad} $\op B$ in $\big(\cat {Set}_{*}^{\mathrm{op}}, \vee, \{+\}\big)$ is specified by
$$\op B(n)=\{ (i,j)\mid 1\leq i<j\leq n\}_{+}, \;\forall n\geq 2,$$
where the subscript $+$ denotes a disjoint basepoint, and
$$\op B(0)=\op B(1)=\{+\}.$$
The operad multiplication is specified by morphisms  in $\cat {Set}_{*}$
$$\mu_{k,\vec n}:\op B(n)\to \op B(k) \vee \big(\op B(n_{1}) \vee\cdots\vee \op B(n_{k})\big)$$
for all $n\geq 1$, $1\leq k \leq n$, and $\vec n\in I_{k,n}$, which are defined by
$$\mu_{k,\vec n}(i,j)=\begin{cases} (i-\vec n^{s}, j-\vec n^{s})\in \op B(n_{s})&: \vec n^{s}<i<j\leq \vec n^{s+1},\\ 
(s,t)\in \op B(k)&: \vec n^{s}<i\leq \vec n^{s+1}\leq \vec n^{t}<j\leq \vec n^{t+1}.
\end{cases}$$
\end{defn}

The choose-two operad admits a unique operad map $\op A\to \op B$ in $\cat {Op({Set}_{*}^{\mathrm{op}})}$, specified in each arity by the unique map in $\cat{Set}_{*}$ from $\op B(n)$ to the singleton $\{+\}$.  The operad map $\op A \to \op B$ endows $\op B$ with the structure of an $\op A$-bimodule in $\cat {Seq({Set}_{*}^{\mathrm{op}})}$, where the left and right $\op A$-actions are specified by the following maps of pointed sets.
\smallskip

\begin{align*}
\lambda_{k,\vec n}\colon &\op B(n)\to \op A(k) \vee \big(\op B(n_{1}) \vee\cdots\vee \op B(n_{k})\big)\cong \op B(n_{1}) \vee\cdots\vee \op B(n_{k}) \\
&(i,j)\mapsto \begin{cases} (i-\vec n^{s}, j-\vec n^{s})\in \op B(n_{s})&: \vec n^{s}<i<j\leq \vec n^{s+1}\\ 
+&: \vec n^{s}<i\leq \vec n^{s+1}\leq \vec n^{t}<j\leq \vec n^{t+1}.
\end{cases}
\\
\\
\rho_{k,\vec n}\colon &\op B(n)\to \op B(k) \vee \big(\op A(n_{1}) \vee\cdots\vee \op A(n_{k})\big)\cong \op B(k)\\
&(i,j)\mapsto \begin{cases}+&: \vec n^{s}<i<j\leq \vec n^{s+1}\\ 
(s,t)\in \op B(k)&: \vec n^{s}<i\leq \vec n^{s+1}\leq \vec n^{t}<j\leq \vec n^{t+1}.
\end{cases}
\end{align*}
\smallskip

The choose-two operad gives rise to families of operads in other symmetric monoidal categories as follows (cf.~\cite [Corollary 3.4]{sinha}).

\begin{lem}\label{lem:phi} Let  $\cat M$ be a monoidal category.  If  $\Phi: \cat{Set}_{*}^{\mathrm{op}}\to \cat M$ is a monoidal functor, then $\Phi (\op B)$, given by applying $\Phi$ aritywise to $\op B$, is an operad.  
\end{lem}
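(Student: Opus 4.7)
The plan is to verify the general principle that a monoidal functor preserves monoid objects, applied to operads viewed as monoids in sequences under the composition product, using the explicit presentation of $\op B$'s operad structure already given in the definition.

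First, I would unpack the operad structure of $\op B$ explicitly. Because the monoidal product of $\cat{Set}_{*}^{\mathrm{op}}$ is $\vee$ and the coproduct of $\cat{Set}_{*}^{\mathrm{op}}$ is the cartesian product of pointed sets, the operad multiplication $\op B\circ\op B\to \op B$ in $\seq(\cat{Set}_{*}^{\mathrm{op}})$ decomposes arity by arity into the given family of morphisms
$$\mu_{k,\vec n}\colon \op B(k)\vee\op B(n_{1})\vee\cdots\vee\op B(n_{k})\to\op B(n)\quad\text{in }\cat{Set}_{*}^{\mathrm{op}},$$
one for each admissible $(k,\vec n)$, subject to the usual associativity and unit constraints. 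Equivalently, these are the $\mu_{k,\vec n}$ displayed in $\cat{Set}_{*}$ in the definition, viewed with arrows reversed.

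Next, I would transport this data through $\Phi$ using its monoidal coherence. For each admissible $(n,k,\vec n)$, the composite
$$\Phi(\op B(k))\otimes\Phi(\op B(n_{1}))\otimes\cdots\otimes\Phi(\op B(n_{k}))\to\Phi\bigl(\op B(k)\vee\op B(n_{1})\vee\cdots\vee\op B(n_{k})\bigr)\xrightarrow{\Phi(\mu_{k,\vec n})}\Phi(\op B(n))$$
in $\cat M$, in which the first arrow iterates the monoidal structure map $\Phi(X)\otimes\Phi(Y)\to\Phi(X\vee Y)$, provides the candidate aritywise operad composition on $\Phi(\op B)$. The operad unit arises from the unit coherence $I_{\cat M}\to\Phi(\{+\})$ combined with $\Phi$ applied to the unit of $\op B$.

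Finally, I would verify the associativity and unit axioms for the resulting $\Phi(\op B)$. These reduce to a diagram chase that combines the corresponding axioms for $\op B$ in $\cat{Set}_{*}^{\mathrm{op}}$ (transported across $\Phi$ by functoriality) with the pentagon and triangle coherence identities for the monoidal structure on $\Phi$. The main obstacle is purely bookkeeping: one must track arrow reversal between $\cat{Set}_{*}$ and $\cat{Set}_{*}^{\mathrm{op}}$ and carefully iterate the coherence natural transformation across the many factors produced by nested compositions. Beyond this, nothing substantive occurs; the argument is the standard preservation of monoid structure by monoidal functors, specialized here to sequences under the composition product.
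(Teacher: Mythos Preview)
Your proposal is correct and is essentially an aritywise unpacking of the paper's two-line proof, which simply observes that aritywise application of $\Phi$ induces a monoidal functor $\seq(\cat{Set}_*^{\mathrm{op}})\to\seq(\cat M)$ with respect to the composition product, and that monoidal functors send monoids to monoids. Your explicit construction of the composites $\Phi(\op B(k))\otimes\cdots\to\Phi(\op B(n))$ and the accompanying coherence diagram chase are exactly the content of that abstract assertion, as you yourself note in your final sentence.
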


\begin{proof} Aritywise application of the functor $\Phi$ induces a monoidal functor on the respective categories of sequences, endowed with the composition monoidal structure.  Since monoidal functors send monoids to monoids, we can conclude. 
\end{proof}

\begin {ex}\label{ex:operad} Let $X$ be a pointed topological space.  Let $\Phi_{X}:\cat{Set}_{*}^{\mathrm{op}}\to \cat {Top}_{*}$ denote the functor defined on objects by $\Phi_{X}(S)=\cat{Top}_{*}(S,X)$, where we view $S$ as a pointed, discrete topological space.  It is clear that $\Phi_{X}$ is monoidal, since $\cat{Top}_{*}\big(\{+\}, X\big)$ is a singleton, and $\cat {Top}_{*}(S\vee T, X)\cong \cat {Top}_{*}(S,X)\times \cat {Top}_{*}(T,X)$, as the categorical coproduct in $\cat {Top }_{*}$ is the wedge.
It follows that $\Phi_{X}(\op B)$ is a  $\cat {Top}_{*}$-operad for all pointed spaces $X$.   Moreover, the $\op A$-bimodule structure on $\op B$ gives rise to $\op A$-bimodule structure on $\Phi_{X}(\op B)$.

If $\op P$ is an operad in pointed spaces that has a single point in arities 0
and 1, then operad maps $\op P\to \Phi_X(\op B)$ are in 1-1
correspondence with maps  of pointed spaces $\op P(2)\to X$.   The arity $n$  part of the operad map $\op P\to \Phi_{X}(\op B)$ extending a  pointed map $f:\op P(2) \to X$ is the transpose of the pointed map 
$$\op P(n)\wedge \binom{n}{2}_{+} \to \op P(2)\xrightarrow f X,$$
where the first factor chooses two inputs of element of $\op P(n)$ to which to feed the unique element of arity 1, while the other inputs are fed the unique element of arity 0. 

In other words, $\Phi_?(\mathcal B)$ is right adjoint to a certain
forgetful functor, and $\mathcal B$ is determined as an operad in
$\mathsf{Set}_*^{{\rm op}}$ by the fact that it corepresents this
forgetful functor.
\end{ex}

\subsection{The Kontsevich operad}

Sinha's cosimplicial model of the space of string knots is based on the following operad. Consider the sphere $S^{n-1}$ as a pointed space with basepoint equal to its south pole, $\mathfrak s$. The Kontsevich operad $\K_{n}$ is a suboperad of $\Phi_{S^{n-1}}(\op B)$, the underlying graded, pointed space of which is defined by restricting to maps $f:\op B(k)\to S^{n-1}$ satisfying  technical conditions that we make explicit below.

\begin{defn}\cite [Definition 4.3]{sinha} Let $\mathfrak S_{4}$ denote the symmetric group on four letters. For any subset  $T=\{i_{1},i_{2},i_{3},i_{4}\}$  of  $\{1,..,k\}$ such that $i_{1}<i_{2}< i_{3}< i_{4}$, the associated \emph{set of  straight $3$-chains} is
$$\mathfrak C(T)=\big\{\vec \imath_{\sigma}=(i_{\sigma (1)}, i_{\sigma (2)}, i_{\sigma (3)}, i_{\sigma (4)})\mid \sigma \in \mathfrak S_{4}\big\}/\sim,$$
where $\vec \imath_{\sigma}\sim \vec \imath_{\tau}$ if $\sigma (j)= \tau (4-j+1)$ for all $1\leq j\leq 4$, i.e, each sequence is equivalent to its reverse.  The \emph{dual} of a straight $3$-chain $\vec\imath_{\sigma}$ is the straight $3$-chain
$$(\vec\imath_{\sigma})^{*}=\vec\imath_{(1243)\sigma}.$$
The permutation $(1243)\sigma$ associated to $(\vec\imath_{\sigma})^{*}$ is denoted $\sigma^{*}$.
\end{defn}

\begin{rmk}  As motivation for the terminology above, we remark that to any straight $3$-chain $\vec\imath_{\sigma}$ in $\mathfrak C(T)$, one can associate a path of length three in the complete graph on four vertices labelled by the elements of $T$
$$\big(\,\overline{i_{\sigma(1)}i_{\sigma(2)}},\; \overline{i_{\sigma(2)}i_{\sigma(3)}},\; \overline{i_{\sigma(3)}i_{\sigma(4)}}\,\big),$$
where the edge joining vertices $i_{j}$ and $i_{j'}$ is denoted $\overline{i_{j}i_{j'}}$. The path associated to $(\vec\imath_{\sigma})^{*}$ traverses the three edges of the graph that are not in the path associated to $\vec\imath_{\sigma}$.
\end{rmk}

The conditions that a map $f:\op B(k)\to S^{n-1}$ must satisfy to belong to $\K_{n}(k)$ are formulated as follows.

\begin{defn} Let $f:\op B(k) \to S^{n-1}$ be a pointed map, extended to a pointed map 
$$f:\big\{ (i,j)\mid 1\leq i, j \leq n, i\not= j\big\}_{+}\to S^{n-1}$$
by $f(j,i)=-f(i,j)$ if $j>i$. Let $\cdot$ denote the usual scalar product in $\R^{3}$.
\smallskip

\begin{itemize}
\item The map $f$ is \emph{three-dependent} if for every subset $\{i_{1},i_{2}, i_{3}\}\subset\{1,..,k\}$ of cardinality three, there exist $b_{1}, b_{2}, b_{3}\in \R_{\geq 0}$, not all $0$, such that 
$$b_{1}f(i_{1}, i_{2})+ b_{2}f(i_{2},i_{3})+ b_{3}f(i_{3}, i_{1})=0.$$
\smallskip

\item The map $f$ is \emph{four-consistent} if for all subsets $T=\{i_{1},i_{2},i_{3},i_{4}\}\subset \{1,..,k\}$ of cardinality four and all $v,w\in S^{n-1}$,
\begin{equation}\label{eqn:4consist}\sum _{\vec\imath_{\sigma}\in \mathfrak C(T)} (-1)^{|\sigma|} \Big(\prod_{j=1}^{3}f(i_{\sigma(j)}, i_{\sigma(j+1)})\cdot v\Big)\Big(\prod_{j=1}^{3}f(i_{\sigma^{*}(j)}, i_{\sigma^{*}(j+1)})\cdot w\Big)=0,
\end{equation}
where $|\sigma|$ denotes the parity of $\sigma$.
\end{itemize}
\end{defn}

In the proof of \cite[Theorem 4.5]{sinha}, Sinha checks that the operad multiplication on $\Phi_{S^{n-1}}(\op B)$ does indeed restrict and corestrict to $\op K_{n}$.  

\begin{defn}The \emph{$n^{\mathrm th}$ Kontsevich operad} is the suboperad $\op K_{n}$ of $\Phi_{S^{n-1}}(\op B)$ specified by
$$\op K_{n}(k)= \big\{ f\in \cat {Top}_{*}(\op B(k), S^{n-1})\mid f \text{ three-dependent and four-consistent}\big\}.$$
\end{defn}

Of course it follows that the $\op A$-bimodule structure on  $\Phi_{S^{n-1}}(\op B)$ restricts and corestricts to $\op K_{n}$ as well. 

\subsection{Sinha's models}
To establish the relation between the Kontsevich operad and the space of long knots, Sinha required a more geometric description of $\op K_{n}$.  Let $C(k,\mathbb R^{n})$ denote the space of ordered configurations $(x_{i})_{1\leq i\leq k}$ of $k$ points in $\mathbb R^{n}$, and let $\widetilde C(k, \mathbb R^{n})$ denote its quotient by the equivalence relation generated by translation and scalar multiplication of entire configurations.   The continuous map
$$\pi: C(k, \mathbb R^{n})\to \map \big(\op B(k), S^{n-1})$$
given by 
$$\pi\big((x_{i})_{1\leq i\leq k} \big)(i,j)=\frac{ x_{i}-x_{j}}{||x_{i}-x_{j}||}$$
for all $1\leq i<j\leq k$ induces a map
$$\widetilde\pi: \widetilde C(k, \mathbb R^{n})\to \map \big(\op B(k), S^{n-1}).$$
Let $\widetilde C\langle k,\mathbb R^{n}\rangle$ denote the closure of the image of $\widetilde \pi$ in $\map \big(\op B(k), S^{n-1})$.

\begin{thm}\cite[Theorem 5.14]{sinha:mfld}  For all $k$ and $n$, 
$$\op K_{n}(k)=\widetilde C\langle k,\mathbb R^{n}\rangle.$$
\end{thm}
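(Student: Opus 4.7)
The plan is to prove the two inclusions $\widetilde C\langle k,\R^n\rangle \subseteq \op K_n(k)$ and $\op K_n(k) \subseteq \widetilde C\langle k,\R^n\rangle$ separately. For the first, I would observe that three-dependence and four-consistency are each defined by closed conditions on $\map(\op B(k), S^{n-1})$ — polynomial equations combined with a nonnegativity constraint whose solution set is closed — so it suffices to verify that every map $f = \pi\big((x_i)_i\big)$ coming from an actual configuration is three-dependent and four-consistent. Three-dependence is immediate from the vector identity
$$(x_{i_1}-x_{i_2})+(x_{i_2}-x_{i_3})+(x_{i_3}-x_{i_1})=0,$$
yielding the nonnegative linear relation among $u_{12},u_{23},u_{31}$ with coefficients $\|x_{i_s}-x_{i_{s+1}}\|$. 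Four-consistency is a more elaborate identity, which I would verify by expanding in coordinates and recognizing the alternating sum over $\mathfrak C(T)$ paired with duals as a Plücker-type cancellation reflecting the two complementary Hamiltonian paths in the complete graph on four vertices.

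The reverse inclusion is the deeper direction, and I would argue by induction on $k$. The base cases $k\leq 2$ are trivial. For $k=3$ three-dependence alone suffices: three unit vectors in $S^{n-1}$ admitting a nontrivial nonnegative relation either span a $2$-plane (and thus realize a genuine triangle up to translation and scaling) or exhibit degeneracies lying in the boundary of $\widetilde C\langle 3,\R^n\rangle$. For the inductive step, the strategy is to take a generic $f \in \op K_n(k)$, apply the induction hypothesis to realize every $(k-1)$-point restriction $f|_{\op B(k-1)}$ as coming from a configuration, and then use four-consistency as the precise coherence condition guaranteeing that these overlapping partial realizations can be glued into a single configuration of $k$ points (again, modulo translation and scaling). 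Non-generic $f$ are then handled by approximation together with the closedness already established in the first step.

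The main obstacle will be the geometric interpretation and exploitation of the four-consistency identity. The alternating sum (\ref{eqn:4consist}) over dual pairs of straight $3$-chains is structured precisely so that the local obstruction to consistently extending a $(k-1)$-point configuration by one additional point, as measured across the 4-element subsets containing the new index, is forced to cancel; translating this cancellation into an actual existence statement for solutions in the compactification requires careful bookkeeping of signs and a delicate analysis of how $\widetilde C\langle k,\R^n\rangle$ absorbs configurations degenerating toward collision strata. Tracking the interaction of the quotient by translation and scaling with these limits is the principal technical challenge, and it is where I expect the bulk of the work to concentrate.
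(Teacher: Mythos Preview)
The paper does not contain a proof of this theorem: it is quoted verbatim as \cite[Theorem 5.14]{sinha:mfld} and used as a black box, with no argument supplied. There is therefore nothing in the present paper to compare your proposal against.

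As an independent sketch, your outline for the inclusion $\widetilde C\langle k,\R^n\rangle \subseteq \op K_n(k)$ is sound: three-dependence follows exactly from the telescoping identity you wrote, and four-consistency for honest configurations is a polynomial identity in the $x_i$'s; closedness of the defining conditions then passes the inclusion to the closure. For the reverse inclusion your inductive strategy is plausible in spirit but underspecified at the crucial step. Having realized every $(k-1)$-element restriction as a configuration does not by itself give a mechanism for gluing: each partial realization is only defined up to translation and scaling, and you have not indicated how four-consistency singles out a compatible system of representatives, nor how the boundary strata of the Fulton--MacPherson compactification enter when no honest configuration exists. In Sinha's original treatment this direction is the substantial one and requires a careful stratified analysis of $\widetilde C\langle k,\R^n\rangle$ together with an explicit algebraic characterization of its boundary; your sketch correctly flags this as the hard part but does not yet contain the idea that makes it go through.
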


Using this description of $\op K_{n}$, Sinha applied methods of embedding calculus to establish the following identification.

\begin{thm}\label{thm:sinha}\cite[Corollary 1.2]{sinha}  Let $n>3$.  If $\op K_{n}^{\bullet}$ denotes the cosimplicial space determined by the $\op A$-bimodule structure on $\op K_{n}$, then 
$$\widetilde{\operatorname{Tot}}\,\op K_{n}^{\bullet}\simeq \L_{1,n},$$
where $\widetilde{\operatorname{Tot}}$ denotes homotopy-invariant totalization.
\end{thm}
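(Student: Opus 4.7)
The plan is to use the Goodwillie-Weiss manifold (embedding) calculus. Consider the contravariant functor $U \mapsto \operatorname{Emb}_\del(U, I^n)$ defined on open subsets $U$ of $I$ containing a fixed collar of $\del I$, along with the analogous immersion functor, and let $F$ denote the homotopy fiber of the natural comparison, so that $F(I) \simeq \L_{1,n}$. The convergence theorem of Goodwillie-Klein-Weiss guarantees that the Taylor tower $\{T_k F(I)\}_k$ converges to $F(I)$ whenever the codimension exceeds two, i.e., when $n-1 \geq 3$, which is exactly the hypothesis $n > 3$.

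The first step is to reorganize the homotopy limit computing $T_\infty F(I)$, which is a priori indexed by the poset of open subsets of $I$ that are disjoint unions of intervals, as a homotopy-invariant totalization $\widetilde{\Tot}\, X^\bullet$ of some cosimplicial space $X^\bullet$. Up to a suitable cofinality argument, choosing $k$ disjoint subintervals of $I$ is parametrized by a contractible space whose strata of collisions and disappearances endow the tower stages with the shape of the cosimplicial indexing category $\Delta$: coface maps correspond to inserting a new small subinterval, codegeneracies to removing one, and the totalization of the resulting $X^\bullet$ then models $T_\infty F(I)$.

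The second and more substantive step is to identify $X^\bullet$ with $\op K_n^\bullet$ as cosimplicial spaces. Here the geometric description $\op K_n(k) = \widetilde C\langle k, \R^n\rangle$ is essential: this is the Axelrod-Singer-Fulton-MacPherson compactification of the configuration space of $k$ points in $\R^n$ modulo translation and scaling, which coincides up to homotopy with the space recorded by the $k$-th stage of the tower (each embedding restricts on a $k$-tuple of points to a labeled configuration, giving the direction data $(x_i - x_j)/\|x_i - x_j\|$ that $\op K_n$ encodes via the choose-two operad). One then checks that the cofaces and codegeneracies induced by the $\op A$-bimodule structure on $\op K_n$, unwound via the maps $\lambda_{k,\vec n}$ and $\rho_{k,\vec n}$ on $\op B$, coincide with those arising from the tower.

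The main obstacle is the combination of this cofinality argument producing the cosimplicial shape from the calculus tower and a strata-by-strata identification of the boundary behavior of $\widetilde C\langle k, \R^n\rangle$ with the moduli of local data used by embedding calculus. Both of these ingredients are carried out in \cite{sinha:mfld} and \cite{sinha}; once in hand, they combine to yield the claimed equivalence $\widetilde{\Tot}\, \op K_n^\bullet \simeq \L_{1,n}$, with the restriction $n > 3$ tracing directly back to the convergence hypothesis in the Taylor tower.
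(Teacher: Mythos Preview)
Your sketch is essentially correct and follows the same high-level strategy as Sinha's original argument: convergence of the Goodwillie--Weiss tower for $n>3$, together with an identification of the tower stages with compactified configuration spaces. Note, however, that in this paper the theorem is simply cited from \cite{sinha}; the paper does not supply its own proof, but rather summarizes Sinha's method immediately afterward.

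The one structural difference between your outline and the summary the paper gives is the role of the intermediate cosimplicial model $C_{\del}\langle\bullet, I^{n}\rangle$. You propose identifying the embedding-calculus tower directly with $\op K_{n}^{\bullet}$ via the description $\op K_{n}(k)=\widetilde C\langle k,\mathbb R^{n}\rangle$. Sinha's actual argument, as recounted here in Theorem~\ref{thm:sinha2} and Remark~\ref{rmk:zigzag}, proceeds in two steps: first, embedding calculus identifies $\L_{1,n}$ with $\widetilde{\operatorname{Tot}}\,C_{\del}\langle\bullet, I^{n}\rangle$, where the points of a configuration retain their actual positions in $I^{n}$; second, a zigzag
\[
C_{\del}\langle\bullet, I^{n}\rangle \leftarrow C_{\del,\ve}\langle\bullet, I^{n}\rangle \xrightarrow{\psi^{\bullet}} \op K_{n}^{\bullet}
\]
through a metric-constrained subspace is needed to pass to $\op K_{n}^{\bullet}$, because the ``forget positions, keep directions'' map is not strictly cosimplicial without that extra constraint. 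Your compressed account elides this point; it is not a fatal gap, since you ultimately defer to \cite{sinha} anyway, but it is precisely the technical content the paper chooses to highlight, and it is what makes the later generalization to $\gamma_{m}\op K_{n}$ in Section~\ref{sec:model} go through.
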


In order to prove this result, Sinha built an intermediary cosimplicial model, defined as follows.  Let $C(k,I^{n})$ denote the space of ordered configurations of $k$ points in $I^{n}$, and consider the map 
$$(\iota, \pi): C(k, I^{n})\to (I^{n})^{k}\times  \map \big(\op B(k), S^{n-1}\big),$$
where $\iota$ is the inclusion, and $\pi$ is defined as above.  Let $C\langle k, I^{n}\rangle$ denote the closure of the image of $(\iota, \pi)$.

\begin{thm}\cite[Theorem 5.6]{sinha}  For all $k$ and $n$,
$$C\langle k, I^{n}\rangle=\Bigg\{\big ((x_{i}), f\big)\in (I^{n})^{k}\times \widetilde C\langle k,\mathbb R^{n}\rangle \mid x_{i}\not= x_{j} \Rightarrow f(i,j)= \frac{ x_{i}-x_{j}}{||x_{i}-x_{j}||}\Bigg\}.$$
\end{thm}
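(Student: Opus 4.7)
The plan is to prove the two inclusions separately, denoting the right-hand side by $R$.

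For the easy inclusion $C\langle k,I^n\rangle \subseteq R$, I would take $((x_i),f)\in C\langle k,I^n\rangle$ and pick a sequence of actual configurations $y^{(m)}\in C(k,I^n)$ with $(\iota,\pi)(y^{(m)})\to ((x_i),f)$. Componentwise, $y^{(m)}_i\to x_i$. Whenever $x_i\ne x_j$ the normalized difference is continuous near $(x_i,x_j)$, so
\[
 f(i,j) \;=\; \lim_m \frac{y^{(m)}_i-y^{(m)}_j}{\|y^{(m)}_i-y^{(m)}_j\|} \;=\; \frac{x_i-x_j}{\|x_i-x_j\|}.
\]
The map $\pi$ factors through $\widetilde\pi$, so each $\pi(y^{(m)})$ already lies in $\widetilde C\langle k,\mathbb R^n\rangle$, and taking the limit gives $f\in \widetilde C\langle k,\mathbb R^n\rangle$ since the latter is closed by construction.

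For the harder inclusion $R\subseteq C\langle k,I^n\rangle$, given $((x_i),f)\in R$ I would partition $\{1,\dots,k\}$ into equivalence classes $S_1,\dots,S_r$ under $i\sim j \iff x_i=x_j$, writing $y_a\in I^n$ for the common value on $S_a$. Since $f\in \widetilde C\langle k,\mathbb R^n\rangle$ there exist genuine configurations $z^{(\ell)}\in C(k,\mathbb R^n)$ with $\widetilde\pi([z^{(\ell)}])\to f$. I would then build an approximating sequence in $C(k,I^n)$ by a cluster construction: for a small scale parameter $\varepsilon>0$ (depending on $\ell$), the perturbed configuration sends $i\in S_a$ to $y_a+\varepsilon\bigl(z^{(\ell)}_i-c_a^{(\ell)}\bigr)$, where $c_a^{(\ell)}$ is a centering shift chosen so that the cluster fits inside $I^n$. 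Translation is invisible to $\pi$, so the shifts are cost-free. Then a direct computation shows:
for $i\in S_a$, $j\in S_b$ with $a\ne b$, the leading $y_a-y_b$ term dominates, and the unit vector tends to $(x_i-x_j)/\|x_i-x_j\|=f(i,j)$; for $i,j\in S_a$, the $\varepsilon$ factor cancels, and the unit vector equals $\pi(z^{(\ell)})(i,j)$, which tends to $f(i,j)$ as $\ell\to\infty$. A diagonal argument in $(\ell,\varepsilon)$ produces a single sequence converging to $((x_i),f)$.

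The step I expect to be the main obstacle is verifying the intra-class convergence rigorously: one needs the restriction of $f$ to $\mathcal B(S_a)$ to be approximated by the $\pi$-image of the sub-configuration $(z^{(\ell)}_i)_{i\in S_a}$, and this uses that the Fulton--MacPherson style closure $\widetilde C\langle k,\mathbb R^n\rangle$ is compatible with the forgetful maps to $\widetilde C\langle |S_a|,\mathbb R^n\rangle$ indexed by subsets. This compatibility is precisely the operadic structure on $\widetilde C\langle -,\mathbb R^n\rangle$ described in \cite{sinha:mfld} and it is the geometric input one must invoke. Once that compatibility is in hand, the remaining estimates are elementary continuity arguments for $z\mapsto z/\|z\|$ on the complement of zero, and the construction produces the desired approximating sequence in $C(k,I^n)$.
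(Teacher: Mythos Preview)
The paper does not prove this statement; it is quoted from \cite[Theorem 5.6]{sinha} and recalled in Section~\ref{sec:cosimplicial} purely as background for the cosimplicial models, with no accompanying proof. So there is no ``paper's own proof'' to compare your proposal against.

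That said, your outline is essentially the standard argument for identifying a Fulton--MacPherson-type closure, and it is correct. One comment: the step you flag as the main obstacle is not in fact an obstacle. Convergence $\widetilde\pi\big([z^{(\ell)}]\big)\to f$ in $\map\big(\op B(k),S^{n-1}\big)$ is already pointwise convergence on \emph{every} pair $(i,j)$, including the intra-cluster ones, so $\pi(z^{(\ell)})(i,j)\to f(i,j)$ for $i,j\in S_a$ is immediate and no compatibility with forgetful maps or operadic structure is needed. What does require a little care is the bookkeeping you gloss over: choosing $\varepsilon=\varepsilon_\ell$ so that $\varepsilon_\ell\cdot\operatorname{diam}(z^{(\ell)})\to 0$ (so the inter-cluster differences are dominated by $y_a-y_b$), ensuring the perturbed clusters remain inside $I^n$ (nudging each $y_a$ slightly into the interior first handles the boundary case), and checking that points from different clusters stay distinct for small $\varepsilon$. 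With those routine choices the diagonal argument works exactly as you describe.
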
 

Fix $x_{-\infty}\in I^{n-1}\times \{0\}$ and  $x_{+\infty}\in I^{n-1}\times \{1\}$.  Let $C_{\del}(k,I^{n})$ denote the subspace of $C(k+2,I^{n})$ consisting of configurations with $x_{1}=x_{-\infty}$ and $x_{k+2}=x_{+\infty}$, and then let $C_{\del}\langle k,I^{n}\rangle$ denote the closure in $C\langle k+2, I^{n}\rangle$ of $C_{\del}(k,I^{n})$.

Sinha showed that the collection $\big\{ C_{\del}\langle\bullet, I^{n}\rangle\big\}_{\bullet\geq 0}$ admitted the structure of a cosimplicial space, where the cofaces and codegeneracies are defined as follows (cf.~\cite[Definition 5.4]{munson-volic}).  For a well chosen unit vector $u$, for all $0\leq j \leq k+1$,
$$d^{j}: C_{\del}\langle k, I^{n}\rangle \to C_{\del}\langle k+1, I^{n}\rangle$$
acts on $\big ((x_{i}), f\big)$ by appending $x_{-\infty}$ to the beginning of $(x_{i})$ if $j=0$, appending $x_{+\infty}$ to the end of $(x_{i})$ if $j=k+1$, and doubling $x_{j}$ otherwise, and by extending $f$ to a function $\hat f$ on  $\op B(k+1)$  that copies the values of $f$ on pairs $(s,t)$ where either $s$ or $t$  is a doubled index and takes value $u$ when the doubled indices are paired.  The $j^{\text{th}}$ codegeneracy simply eliminates the $j^{\text{th}}$ point of $(x_{i})$, relabels the points, then restricts $f$.

\begin{thm}\label{thm:sinha2}\cite[Theorem 6.9]{sinha} For all $n$,
$$\widetilde{\operatorname{Tot}}\, C_{\del}\langle\bullet, I^{n}\rangle \simeq \widetilde{\operatorname{Tot}}\, \op K_{n}^{\bullet}.$$
\end{thm}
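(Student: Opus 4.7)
The plan is to exhibit a natural morphism of cosimplicial spaces $p\colon C_\del\langle\bullet, I^n\rangle \to \K_n^{\bullet}$ given by projection onto the direction factor, show it is a levelwise weak equivalence, and then pass to homotopy totalization.

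The candidate map is $p_k((x_i), f) = f$, landing in the appropriate arity component of $\K_n = \widetilde C\langle \bullet, \mathbb R^n\rangle$ (with the arity determined by the cosimplicial indexing convention). First I would verify that $\{p_k\}$ is a cosimplicial map. Codegeneracy compatibility is immediate, since forgetting a configuration point and relabeling restricts direction data in the obvious way. For cofaces, the boundary operators $d^0$ and $d^{k+1}$ (prepending or appending $x_{\pm\infty}$) correspond under $p$ to the left- and right-unit-insertions coming from the $\op A$-bimodule structure on $\K_n$, while the interior $d^j$ (doubling a point and extending $f$ by copying values plus inserting the distinguished vector $u$ on the paired doubled indices) correspond to the operadic composition with the arity-$2$ element at slot $j$. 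Since the bimodule structure on $\K_n$ arises via $\Phi_{S^{n-1}}$ from the combinatorial structure on $\op B$ described in Section \ref{sec:choose-two}, this check reduces to straightforward bookkeeping.

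Second, I would show that each $p_k$ is a weak equivalence by analyzing its fibers. Over a point $f$, the fiber consists of configurations in $I^n$ realizing the prescribed pairwise directions, with endpoints fixed at $x_{\pm\infty}$. Once $f$ is prescribed, all displacement directions from $x_{-\infty}$ are determined, so the interior points are forced to lie on rays emanating from $x_{-\infty}$; only radial distances along those rays remain free, subject to compatibility with the Fulton--MacPherson collision strata. I would construct an explicit fiberwise deformation retraction to a canonical normalized configuration (for instance, points distributed at equal parameter intervals along the prescribed rays, then rescaled to fit inside $I^n$), depending continuously on $f$ and extending smoothly across the stratification. Combined with the check that both cosimplicial spaces are Reedy fibrant --- their matching maps are closed inclusions of compact, smoothly stratified manifolds with corners --- this levelwise equivalence yields the desired equivalence on $\widetilde{\Tot}$.

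The main obstacle is producing the fiberwise deformation retraction continuously across the Fulton--MacPherson boundary strata, where configuration points collide and the direction datum $f$ encodes only asymptotic information. Handling these strata rigorously requires working within the explicit blow-up description of $C\langle k, I^n\rangle$ as a manifold with corners, which is the technical core of \cite[\S 6]{sinha}.
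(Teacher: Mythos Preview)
Your approach has a genuine gap: the projection $p_k\big((x_i),f\big)=f$ is \emph{not} a cosimplicial map, and this is precisely the obstruction that shapes Sinha's actual argument.

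The failure occurs at the outer cofaces $d^0$ and $d^{k+1}$.  On $C_\del\langle\bullet,I^n\rangle$, the map $d^0$ inserts a new interior point located at $x_{-\infty}$; the extended direction datum $\hat f$ then records, for each pair consisting of the new point and an old interior point $x_j$, the honest geometric direction $(x_{-\infty}-x_j)/\|x_{-\infty}-x_j\|$, which depends on the actual positions of the $x_j$ in $I^n$.  On $\K_n^\bullet$, by contrast, $d^0$ comes from the left $\op A$-action and simply inserts the fixed basepoint $\mathfrak s\in S^{n-1}$ for every such pair.  These agree only for very special configurations, so $p\circ d^0 \neq d^0\circ p$ in general, and the same failure occurs for $d^{k+1}$.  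Your bookkeeping check for the interior cofaces is fine (assuming $u=\mathfrak s$), but the outer ones cannot be handled this way.

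This is exactly why Sinha's proof, as recorded in Remark~\ref{rmk:zigzag}, uses a zigzag
\[
C_{\del}\langle\bullet, I^{n}\rangle \xleftarrow{\ \ } C_{\del, \ve}\langle\bullet, I^{n}\rangle \xrightarrow{\ \psi^{\bullet}\ } \op K_{n}^{\bullet}
\]
rather than a single map.  The intermediate term is the cosimplicial subspace cut out by an additional metric condition; that condition is precisely what is needed to make a \emph{strictly} cosimplicial map $\psi^\bullet$ to $\K_n^\bullet$ exist, and one then shows both arrows induce weak equivalences on totalization.  Your fiber-contraction idea is in the right spirit for the levelwise-equivalence step, but it cannot repair the failure of cosimpliciality; you have located the technical difficulty in the wrong place.
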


\begin{rmk}\label{rmk:zigzag} Sinha proved the first weak equivalence in Theorem \ref{thm:sinha2} by establishing the existence of a zigzag of cosimplicial continuous maps
$$C_{\del}\langle\bullet, I^{n}\rangle \xleftarrow {} C_{\del, \ve}\langle\bullet, I^{n}\rangle \xrightarrow{\psi^{\bullet}} \op K_{n}^{\bullet},$$
both of which induce weak equivalences upon totalization.  The lefthand arrow is the inclusion of the cosimplicial subspace of $C_{\del}\langle\bullet, I^{n}\rangle$ consisting of points satisfying an additional metric condition that makes it possible to define a strictly cosimplicial map $\psi^{\bullet}$ to $\op K_{n}^{\bullet}$.
\end{rmk}

\subsection{The Munson-Voli\'c model}

For any $m\geq 1$, Munson and Voli\'c showed that the collection 
$$\big\{ C_{\del}\langle \bullet \cdot m, I^{n}\rangle\big\}_{\bullet\geq 0}$$
admitted the structure of a cosimplicial space, where
$$d^{j}: C_{\del}\langle km, I^{n}\rangle \to C_{\del}\langle (k+1)m, I^{n}\rangle$$
doubles the $m$ points $x_{(j-1)m +1}, x_{(j-1)m +2},...,x_{jm}$  simultaneously (or inserts  $m$ copies of either $x_{-\infty}$ or $x_{+\infty}$, if $j=0$ or $k+1$) and extends the function coordinate over $\op B\big((k+1)m\big)$ appropriately.  Similarly, the codegeneracies eliminate $m$ points simultaneously.

Generalizing Sinha's embedding calculus argument, they then established the following generalization of Theorem \ref{thm:sinha2}.

\begin{thm}\label{thm:munson-volic}\cite[Proposition 5.10, Remark (12)]{munson-volic}  For all $n>3$ and $m \geq 1$,
$$\widetilde{\operatorname{Tot}}\,C_{\del}\langle\bullet\cdot m, I^{n}\rangle \simeq\L_{m,n}.$$
\end{thm}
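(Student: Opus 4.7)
The plan is to mimic Sinha's strategy for the case $m=1$ (Theorem \ref{thm:sinha2}), applying Goodwillie-Weiss manifold calculus to the embedding $I^{+m}\hookrightarrow I^n$ as a whole rather than one component at a time. I would apply the calculus to the contravariant functor
$$E\colon U \mapsto \operatorname{hofib}\!\bigl(\operatorname{Emb}_\del(U,I^n)\hookrightarrow \operatorname{Imm}_\del(U,I^n)\bigr)$$
defined on the poset of open submanifolds $U\subset I^{+m}$ containing a fixed neighborhood of $\del I^{+m}$. Since $\dim I^{+m}=1$, the Goodwillie-Weiss convergence theorem guarantees that the Taylor tower $\{T_k E\}$ converges to $E(I^{+m})\simeq \L_{m,n}$ whenever $n-1\geq 3$, which is precisely the hypothesis $n>3$.

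Second, I would identify the $k$-th polynomial stage $T_k E(I^{+m})$ with the $k$-th partial totalization of the cosimplicial space $C_\del\langle \bullet\cdot m, I^n\rangle$. By definition, $T_k E(I^{+m})$ is the homotopy limit of $E$ over sub-disjoint-unions of at most $k$ open intervals distributed among the $m$ components of $I^{+m}$. The key move is to exhibit the subposet of ``balanced'' sub-disjoint-unions, those with the same number of intervals on every component, as homotopy cofinal in the indexing poset for $T_k E$, and to recognize the resulting matching object as the $k$-th partial matching space of $C_\del\langle \bullet\cdot m, I^n\rangle$. Once this is done, Sinha's identification of $E$ evaluated on a balanced disjoint union of $k$ intervals per component with the compactified configuration space $C_\del\langle km,I^n\rangle$ (cf.~\cite[Theorem 5.14]{sinha:mfld} and \cite[Theorem 5.6]{sinha}) carries over verbatim.

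Third, I would verify that the cosimplicial structure maps arising from the Goodwillie-Weiss matching diagram coincide with those defined above for $C_\del\langle \bullet\cdot m, I^n\rangle$. Inserting an additional ``row'' of $m$ intervals (one per component) between rows $j-1$ and $j$ corresponds, under the compactification, to doubling the $m$ points $x_{(j-1)m+1},\dots,x_{jm}$ and extending the direction data by a well-chosen unit vector $u$ on the newly paired indices; deleting a row gives the $j$-th codegeneracy. Passing to the homotopy limit over $k$ then yields $\L_{m,n}\simeq \widetilde{\operatorname{Tot}}\, C_\del\langle\bullet\cdot m, I^n\rangle$.

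The principal obstacle is the cofinality argument in the second step: one must show that any sub-disjoint-union of $\leq k$ intervals is connected by a zigzag of refinements to a balanced one, over which $E$ carries weak equivalences after appropriate stabilization. In codimension $\geq 3$ the Goodwillie-Weiss matching conditions translate such refinements into honest weak equivalences of compactified configuration spaces, and hence the balanced subposet is indeed homotopy cofinal; this is the essential new input beyond the $m=1$ case, and is the technical heart of \cite[Proposition 5.10]{munson-volic}.
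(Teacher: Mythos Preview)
The paper does not give its own proof of this statement; it is recorded purely as a citation of \cite[Proposition 5.10, Remark (12)]{munson-volic}, preceded only by the sentence ``Generalizing Sinha's embedding calculus argument, they then established the following generalization of Theorem~\ref{thm:sinha2}.'' There is thus no argument in the present paper against which to compare your proposal in detail; what you have written is a sketch of how one might reconstruct the Munson--Voli\'c proof, and in broad outline (Goodwillie--Weiss calculus applied to the homotopy fiber of embeddings in immersions, convergence in codimension $\geq 3$, identification of the tower with a cosimplicial totalization) it is certainly in the right spirit.

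Two points in your sketch would need tightening before it could stand as a proof. First, there is an index mismatch in your second step: $T_kE(I^{+m})$ is a homotopy limit over opens with at most $k$ interval components \emph{in total}, while the $k$-th partial totalization of $C_\del\langle\bullet\cdot m,I^n\rangle$ involves $km$ configuration points, i.e.\ $k$ intervals \emph{per strand}. So it is $T_{km}E$, not $T_kE$, that one should compare with the $k$-th partial $\widetilde{\Tot}$; this is harmless in the limit but blurs the matching-object identification as you have phrased it. Second, your ``balanced cofinality'' assertion is the delicate step and is not obviously true as stated: for the inclusion of balanced opens into $\mathcal O_k$ to be homotopy initial one needs, for each unbalanced $U$, the poset of balanced $V\supset U$ \emph{inside the given $\mathcal O_k$} to be contractible, and for small $k$ relative to $m$ this poset can be empty. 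Munson and Voli\'c organise the reduction differently (through subcubical and multicosimplicial diagrams rather than a single cofinality statement); if you want to push your route through, you would need to replace the stage-by-stage cofinality claim with an interleaving argument comparing the balanced and full towers in the limit.
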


\begin{rmk}\label{rmk:proj} The definition of the cosimplicial structure on $C_{\del}\langle\bullet\cdot m, I^{n}\rangle$ makes it clear that for every $1\leq r \leq m$, there is a  cosimplicial projection map
$$p_{r}:C_{\del}\langle\bullet\cdot m, I^{n}\rangle\to C_{\del}\langle\bullet, I^{n}\rangle$$
that sends $(x_{i})_{1\leq i\leq km}$ to $(x_{r}, x_{m+r},..., x_{(k-1)m+r})$ for all $k\geq 1$ and restricts and corestricts the function coordinate appropriately.  It is clear from \cite[Remarks 5.2]{munson-volic} and  the discussion following \cite[Proposition 5.10]{munson-volic} that $p_{r}$ corresponds after totalization to the projection map from $\L_{m,n}$ onto the $r^{\text{th}}$ strand.
\end{rmk}

\section{The divided powers model for $\L_{m,n}$}\label{sec:model}

In this section we prove Theorem \ref{thm:divpowmodel}, building upon the results recalled in the previous section.

\subsection{Divided powers of the choose-two operad}
We begin by showing that $\gamma_{m}\op B$ admits a natural $\op A$-bimodule structure for all $m\geq 1$.

\begin{notn}\label{notn:remainder} Let $m,n\geq 1$. For any $1\leq i \leq mn$, let $a_{i}\in [1,n]$ and $r_{i}\in [1,m]$ be the unique integers such that
$$i= (a_{i}-1)m + r_{i}.$$
Note that if $1\leq i<j\leq mn$, then either $a_{i}<a_{j}$ or $a_{i}=a_{j}$ and $r_{i}<r_{j}$.
\end{notn}

\begin{prop}\label{prop:divpow-op} Let $m\geq 1$. The divided powers sequence $\gamma_{m}\op B$ admits the structure of an $\op A$-bimodule, which is specified for all $n\geq 1$, $1\leq k \leq n$, and $\vec n\in I_{k,n}$ by the following morphisms in $\cat {Set}_{*}$.
\smallskip
{\small \begin{align*}
&\lambda^{(m)}_{k,\vec n}: \gamma_{m}\op B(n) \to \op A(k) \vee \big(\gamma_{m}\op B(n_{1}) \vee\cdots\vee \gamma_{m}\op B(n_{k})\big)\cong \gamma_{m}\op B(n_{1}) \vee\cdots\vee \gamma_{m}\op B(n_{k})\\
&(i,j)\mapsto \begin{cases}\big((a_{i}-\vec n^{s}-1)m+r, (a_{j}-\vec n^{s}-1)m+ r\big)\in \gamma_{m}\op B(n_{s})&: \vec n^{s}<a_{i}< a_{j}\leq \vec n^{s+1},\\ &\;\; r_{i}=r_{j}=r\\ +&:\text{else}\end{cases}\\
\\
\\
&\rho^{(m)}_{k,\vec n}: \gamma_{m}\op B(n) \to \gamma_{m}\op B(k) \vee \big(\op A(n_{1}) \vee\cdots\vee \op A(n_{k})\big)\cong \gamma_{m}\op B(k)\\
&(i,j)\mapsto \begin{cases}\big((s-1)m+r,(t-1)m+r\big)&:\vec n^{s}<a_{i}\leq \vec n^{s+1}\leq\vec n^{t}<a_{j}\leq \vec n^{t+1},\\
&\;\; r_{i}=r_{j}=r\\ 
+&:\text{else}\end{cases}
\end{align*}}
\end{prop}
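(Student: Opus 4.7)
The plan is to verify directly that the proposed formulas satisfy the $\op A$-bimodule axioms, by reducing each axiom to the corresponding axiom for the ambient $\op A$-bimodule structure on $\op B$ recalled in Section \ref{sec:choose-two}, applied ``one strand at a time.'' The guiding observation is that, via the bijection $i\leftrightarrow (a_i,r_i)$ of Notation \ref{notn:remainder}, each index $1\leq i\leq mn$ splits into a \emph{block index} $a_i\in [1,n]$ and a \emph{strand index} $r_i\in[1,m]$, and a direct calculation yields the identity $i-m\vec n^s = (a_i-\vec n^s-1)m + r_i$. Consequently, modulo the identification $\gamma_m\op B(n_s)=\op B(mn_s)$, the map $\lambda^{(m)}_{k,\vec n}$ agrees with the original $\lambda_{k,m\vec n}\colon \op B(mn)\to \bigvee_s \op B(mn_s)$ on pairs $(i,j)$ with $r_i=r_j$ and sends all other pairs to the basepoint; a symmetric statement holds for $\rho^{(m)}_{k,\vec n}$. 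In particular, both actions preserve the strand index whenever they yield a non-basepoint value.

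With this dictionary in hand, the verification proceeds in three steps. First, I would check unitality of each action by specializing to the trivial compositions: for $\lambda^{(m)}$, take $k=n$ and $\vec n=(1,\ldots,1)$, using $\gamma_m\op B(1)=\op B(m)$; for $\rho^{(m)}$, take $k=1$ and $\vec n=(n)$. Second, I would verify associativity of the left action by tracing a pair $(i,j)\in\gamma_m\op B(n)$ through both composites in the associativity diagram for $\lambda^{(m)}$, observing that both composites send $(i,j)$ to the basepoint whenever $r_i\neq r_j$, since the strand condition must be satisfied at every level of iteration; when $r_i=r_j=r$, both composites reduce via the block/strand decomposition to the known associativity identity for $\lambda$ on $\op B$ applied to the pair $(a_i,a_j)$. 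The associativity of $\rho^{(m)}$ is verified in exactly the same way. Third, I would verify the left-right compatibility of $\lambda^{(m)}$ and $\rho^{(m)}$ by the same strategy: cross-strand pairs are annihilated by both composites, while within a single strand the identity reduces to the known compatibility for $\op B$.

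The main obstacle will be bookkeeping. The iterated actions involve two nested layers of offsets $\vec n^s$ and $\vec m^t$, and one must consistently translate range conditions on the combined index $i$ (such as $m\vec n^s<i\leq m\vec n^{s+1}$) into equivalent conditions on the block index ($\vec n^s<a_i\leq \vec n^{s+1}$), while tracking the strand index separately. Once this translation is set up, the verification of each axiom is a parametrized version of the corresponding verification for $\op B$, carried out $m$ times in parallel, together with the uniform statement that cross-strand pairs always go to the basepoint.
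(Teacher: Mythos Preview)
Your approach is essentially the paper's: the paper's one-paragraph proof observes that, viewing elements of $\gamma_m\op B(n)$ as $m\times n$ matrices with exactly two entries equal to $1$, the maps $\lambda^{(m)}$ and $\rho^{(m)}$ are the ``row-wise'' application of the $\op A$-action on $\op B$ (annihilating cross-row pairs and preserving each row), which is precisely your block/strand decomposition with strand $=$ row and block $=$ column. You are simply spelling out axiom by axiom the verification that the paper leaves implicit.
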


\begin{proof}  If one thinks of elements of $\gamma_{m}\op B(n)$ as $(m\times n)$-matrices that have two coefficients equal to 1 and all the rest 0, then the $\op A$-action described above is the ``row-wise'' application of the $\op A$-action on $\op B$. We conclude that $\lambda^{(m)}$ and $\rho^{(m)}$ as defined above really do give rise to an $\op A$-bimodule structure on $\gamma_{m}\op B$, as $\lambda^{(m)}$ and $\rho^{(m)}$ simply act trivially on matrices in which the 1's are not in the same row and preserve the set of matrices with both 1's in the $r^{\text{th}}$ row.
\end{proof}

It is now easy to check the compatibility of the $\op A$-bimodule structures on $\op B$ and $\gamma_{m}\op B$.  We leave the proof of the next proposition to the reader.

\begin{prop}\label{prop:opstructure}  For all $m\geq 1$, there is a  morphism $\alpha_{m}$ of  $\op A$-bimodules in $\cat{Seq(Set_{*}^{op} )}$ from $\gamma_{m}\op B$ to $\bigvee_{1\leq r\leq m}\op B$, such that the underlying map in $\cat {Set}_{*}$ in arity $n$ is the injection
$$\bigvee_{1\leq r\leq m}\op B(n) \to \gamma_{m}\op B(n): (i,j)_{r}\mapsto \big((i-1)m+r,(j-1)m+r\big).$$
\end{prop}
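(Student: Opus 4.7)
The plan is first to equip $\bigvee_{1\leq r\leq m}\op B$ with an obvious componentwise $\op A$-bimodule structure, and then to verify compatibility of $\alpha_{m}$ by direct case analysis.

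In $\cat{Set}_{*}$, define the left and right $\op A$-actions on $\bigvee_{r}\op B$ summand-wise by
$\lambda^{\vee}_{k,\vec n}(i,j)_{r}=(i-\vec n^{s},j-\vec n^{s})_{r}\in\op B(n_{s})\hookrightarrow \bigvee_{r}\op B(n_{s})$ when $\vec n^{s}<i<j\leq\vec n^{s+1}$, and $\rho^{\vee}_{k,\vec n}(i,j)_{r}=(s,t)_{r}\in\bigvee_{r}\op B(k)$ when $\vec n^{s}<i\leq \vec n^{s+1}\leq \vec n^{t}<j\leq \vec n^{t+1}$; in all remaining cases the action sends $(i,j)_{r}$ to the basepoint $+$. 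These formulas give an $\op A$-bimodule structure in $\seq(\cat{Set}_{*}^{\mathrm{op}})$, since within each wedge summand they are exactly the action of $\op A$ on $\op B$ recalled in Section~\ref{sec:choose-two}, so the associativity and unit axioms are inherited summand by summand.

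Next, I would verify that $\bar\alpha_{m}^{n}(i,j)_{r}=\bigl((i-1)m+r,(j-1)m+r\bigr)$, together with $+\mapsto +$, defines an injective pointed map $\bar\alpha_{m}^{n}\colon\bigvee_{r}\op B(n)\to\gamma_{m}\op B(n)$. Injectivity is immediate: in the decomposition of Notation~\ref{notn:remainder} the image element $(I,J)$ has $a_{I}=i$, $a_{J}=j$, and $r_{I}=r_{J}=r$, from which the triple $(i,j,r)$ is uniquely recovered.

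Finally, to see that $\alpha_{m}$ is a bimodule morphism I would check that the two squares expressing compatibility of $\bar\alpha_{m}$ with the left and right actions commute in $\cat{Set}_{*}$. Starting from $(i,j)_{r}$, both paths around the left-action square produce $\bigl((i-\vec n^{s}-1)m+r,(j-\vec n^{s}-1)m+r\bigr)\in\gamma_{m}\op B(n_{s})$ exactly when $\vec n^{s}<i<j\leq\vec n^{s+1}$, and the basepoint $+$ otherwise; similarly both paths around the right-action square produce $\bigl((s-1)m+r,(t-1)m+r\bigr)\in\gamma_{m}\op B(k)$ exactly when $\vec n^{s}<i\leq\vec n^{s+1}\leq\vec n^{t}<j\leq\vec n^{t+1}$, and $+$ otherwise. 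Agreement of the non-basepoint outputs follows because the formulas for $\lambda^{(m)}_{k,\vec n}$ and $\rho^{(m)}_{k,\vec n}$ in Proposition~\ref{prop:divpow-op} require $r_{I}=r_{J}$, which is automatic on the image of $\bar\alpha_{m}^{n}$, while their conditions on $(a_{I},a_{J})$ translate directly into the above conditions on $(i,j)$. The proof is therefore purely combinatorial bookkeeping; the only (very minor) obstacle is keeping track of the four clauses in the definitions of $\lambda^{(m)}$ and $\rho^{(m)}$, which collapse to exactly the two non-trivial cases above on the image of $\bar\alpha_{m}^{n}$.
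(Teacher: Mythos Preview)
Your proof is correct and is precisely the routine verification that the paper leaves to the reader (the authors write ``It is now easy to check the compatibility of the $\op A$-bimodule structures on $\op B$ and $\gamma_{m}\op B$.\ We leave the proof of the next proposition to the reader.''). Your componentwise $\op A$-bimodule structure on $\bigvee_{r}\op B$ and the direct comparison of both paths around the action squares is exactly the intended argument; the observation that on the image of $\bar\alpha_{m}^{n}$ one always has $r_{I}=r_{J}$ and $(a_{I},a_{J})=(i,j)$ is all that is needed.
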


A simple proof, similar to that  of Lemma \ref{lem:phi}, enables us to conclude that $\gamma_{m}\op B$ gives rise to families of $\op A$-bimodules in other symmetric monoidal categories. Note that $\Phi (\gamma_{m}\op B)=\gamma_{m}\Phi(\op B)$.

\begin{cor} Let  $\cat M$ be a monoidal category.  If  $\Phi: \cat{Set}_{*}^{\mathrm{op}}\to \cat M$ is a monoidal functor, then $\Phi (\gamma_{m}\op B)$, given by applying $\Phi$ aritywise to $\gamma_{m}\op B$, is an $\op A$-bimodule for all $m\geq 1$.  Moreover, for every $m$, there is a morphism of $\op A$-bimodules  
$$\alpha_{m}^{\Phi}:\gamma_{m} \Phi (\op B) \to \Phi (\op B)^{\times m}.$$
\end{cor}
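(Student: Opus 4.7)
The plan is to mimic the argument of Lemma \ref{lem:phi} essentially verbatim, promoting it from the level of operads (monoids) to that of bimodules. Since $\Phi \colon \cat{Set}_*^{\mathrm{op}} \to \cat M$ is monoidal, aritywise application of $\Phi$ defines a monoidal functor $\Phi_* \colon \seq(\cat{Set}_*^{\mathrm{op}}) \to \seq(\cat M)$ with respect to the composition product. Since monoidal functors send monoids to monoids and bimodules to bimodules, and since $\Phi_*(\op A) = \op A$ (because $\op A(n)$ is the monoidal unit $\{+\}$ in $\cat{Set}_*^{\mathrm{op}}$, which is sent by $\Phi$ to the monoidal unit $I$ of $\cat M$), applying $\Phi_*$ to the $\op A$-bimodule structure of Proposition \ref{prop:divpow-op} yields an $\op A$-bimodule structure on $\Phi_*(\gamma_m \op B)$. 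The identification $\Phi_*(\gamma_m \op B) = \gamma_m \Phi_*(\op B)$ is immediate from the definition of the divided powers functor, since this functor acts by reindexing a sequence and commutes trivially with any aritywise operation.

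For the morphism $\alpha_m^\Phi$, I would simply set $\alpha_m^\Phi := \Phi_*(\alpha_m)$, where $\alpha_m \colon \gamma_m \op B \to \bigvee_{1 \leq r \leq m} \op B$ is the $\op A$-bimodule map of Proposition \ref{prop:opstructure}. Because $\Phi_*$ preserves $\op A$-bimodule maps, $\alpha_m^\Phi$ is a map of $\op A$-bimodules. To identify its codomain, use that monoidality of $\Phi$ gives
\[
\Phi\Bigl(\bigvee_{r=1}^m \op B(n)\Bigr) \;\cong\; \Phi\bigl(\op B(n)\bigr)^{\otimes m} \;=\; \bigl(\Phi(\op B)^{\times m}\bigr)(n)
\]
for all $n$, where $\Phi(\op B)^{\times m}$ denotes the $m$-fold monoidal product in $\cat M$ (which in the motivating case $\cat M = \cat{Top}_*$ specializes to the cartesian product, as in Example \ref{ex:operad}). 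Thus $\alpha_m^\Phi$ has the stated form $\gamma_m \Phi(\op B) \to \Phi(\op B)^{\times m}$.

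The main technical point, already implicit in Lemma \ref{lem:phi}, is that aritywise application of a monoidal functor is indeed monoidal with respect to the composition product; this is where the strong monoidal structure on $\Phi$ is used, together with the fact that the composition product is built out of wedges (which the monoidal structure controls) and coproducts indexed over finite sets $I_{k,n}$ (which are preserved by any functor of the relevant form, since the indexing sets are discrete). Once this is in hand, the preservation of bimodule structures and bimodule maps is a formal consequence of working in a monoidal category of sequences, so no further obstacle arises.
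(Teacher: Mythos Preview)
Your proposal is correct and follows exactly the approach the paper indicates: the paper simply remarks that a proof ``similar to that of Lemma~\ref{lem:phi}'' suffices and notes the identity $\Phi(\gamma_m\op B)=\gamma_m\Phi(\op B)$, and you have spelled this out in detail, including the identification of the codomain of $\alpha_m^\Phi$ via monoidality of $\Phi$. One small caveat: in your final paragraph, the remark that the outer coproducts over $I_{k,n}$ are ``preserved by any functor of the relevant form'' is not quite the right justification---what actually makes $\Phi_*$ send operads to operads and bimodules to bimodules is that a strong monoidal $\Phi$ carries each individual structure map $\op X(k)\otimes\op X(n_1)\otimes\cdots\otimes\op X(n_k)\to\op X(n)$ to the corresponding one in $\cat M$, so no coproduct-preservation is needed; but this does not affect the correctness of your argument, which you had already established in the first paragraph.
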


\begin {ex}\label{ex:gamma} Let $X$ be a pointed topological space.  Let $\Phi_{X}:\cat{Set}_{*}^{\mathrm{op}}\to \cat {Top}_{*}$ again denote the functor defined on objects by $\Phi_{X}(S)=\cat{Top}_{*}(S,X)$.  
Since $\Phi_{X}$ is monoidal, 
$$\alpha_{m}^{X}:=\alpha_{m}^{\Phi_{X}}:\gamma_{m}\Phi_{X}(\op B)\to \Phi_{X}(\op B)^{\times m}$$ 
is a morphism of $\op A$-bimodules in  $\cat{Seq( {Top}_{*})}$ for all pointed spaces $X$
\end{ex}

\subsection{Divided powers of $\op K_{n}$}

In this section we first show that $\gamma_{m}\op K_{n}$ admits a useful $\op A$-bimodule structure, which we then apply to proving Theorem \ref{thm:divpowmodel}. 

\begin{prop}  For all $m,n\geq 1$, the $\op A$-bimodule structure on $\gamma_{m}\Phi_{S^{n-1}}(\op B)$ restricts and corestricts to $\gamma_{m}\op K_{n}$
\end{prop}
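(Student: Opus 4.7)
The plan is to apply $\Phi_{S^{n-1}}$ aritywise to the structure maps $\lambda^{(m)}_{k,\vec n}$ and $\rho^{(m)}_{k,\vec n}$ of Proposition~\ref{prop:divpow-op} and verify directly that the resulting operations on $\gamma_m \Phi_{S^{n-1}}(\op B)$ preserve the three-dependence and four-consistency conditions defining $\op K_n$. Unwinding the definitions, for $f\in \gamma_m \Phi_{S^{n-1}}(\op B)(k) = \cat{Top}_*(\op B(km), S^{n-1})$ the right action produces $g=f\circ\rho^{(m)}_{k,\vec n}$ with $g(i,j)=f((s-1)m+r,(t-1)m+r)$ whenever $r_i=r_j=r$ and $a_i,a_j$ lie in distinct $\vec n$-blocks $s<t$, and $g(i,j)=\mathfrak s$ otherwise. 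The left action yields an output $h$ assembled from a $k$-tuple $(f_s)_s$ with $f_s\in\cat{Top}_*(\op B(n_sm),S^{n-1})$, where $h(i,j)$ is drawn from $f_s$ on same-strand pairs with both $a$-coordinates in block $s$, and is $\mathfrak s$ on every other pair.

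The verification of three-dependence for $g$ (and analogously for $h$) proceeds by case analysis on a 3-subset $\{i_1,i_2,i_3\}$ according to the strand residues $r_{i_\alpha}$ and the $\vec n$-blocks of the $a_{i_\alpha}$. The nondegenerate subcase --- all three indices in a single strand $r$ and three distinct blocks --- is exactly the pullback of the three-dependence condition for $f$ on the image triple $\{(s_\alpha-1)m+r\}$, so it holds by hypothesis on $f$. The remaining degenerate subcases involve enough pair-values equal to $\pm\mathfrak s$ that a non-trivial non-negative relation is always available: either from the identity $v+(-v)=0$ applied to two coincident non-basepoint values, or from a pure $\mathfrak s$-relation among collinear basepoint values.

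For four-consistency the strategy is parallel: write out the signed sum of equation~\eqref{eqn:4consist} applied to the output, and decompose it by the strand/$\vec n$-block profile of the 4-subset $T$. When $T$ lies entirely within one strand and spans four distinct blocks, the sum reduces term by term to the four-consistency identity for $f$ on the corresponding translated 4-subset. In the mixed profiles, many chain factors contain an edge value $\mathfrak s$, and the sign $(-1)^{|\sigma|}$ together with the involutive chain/dual pairing $\sigma \mapsto \sigma^*$ is used to organize the $\mathfrak s$-dominated terms into cancelling pairs. The left action is handled analogously, with each $f_s$ controlling the contribution of pairs with both endpoints in block $s$ and the same strand.

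The hard part will be the combinatorial bookkeeping in the four-consistency verification: the twelve straight 3-chains on a 4-subset have an intricate sign structure, and establishing the pairwise cancellation of the mixed basepoint/non-basepoint terms requires careful tracking of the parities of the defining permutations together with the interaction between the chain/dual involution and the strand/block decomposition of $T$.
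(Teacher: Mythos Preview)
Your plan is correct and follows essentially the same route as the paper: unwind the right (and, analogously, left) action into the explicit formula $F=f\circ\rho^{(m)}_{k,\vec n}$, then verify three-dependence and four-consistency by case analysis on how the residues $r_{i_\alpha}$ and block indices of the $a_{i_\alpha}$ are distributed, with the only ``genuine'' subcase in each verification being the single-strand, all-distinct-blocks configuration that reduces to the hypothesis on~$f$.

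One small clarification on the four-consistency bookkeeping: the cancelling pairs in the mixed profiles are \emph{not} literally the chain/dual pairs $(\sigma,\sigma^{*})$. Rather, the paper exhibits case-specific pairings such as $\pi_{\id}\leftrightarrow\pi_{(34)}$ (when $r_{i_1}=r_{i_2}\neq r_{i_3}=r_{i_4}$) or $\pi_{\id}\leftrightarrow\pi_{(13)}$ (when $s_1=s_2=s_3<s_4$), chosen so that the two chains differ by a transposition that swaps indices whose contributions to the relevant edge-products agree; duality then guarantees that each such pairing induces a matching pairing $\pi_{\sigma^{*}}\leftrightarrow\pi_{\tau^{*}}$. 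So your instinct that the chain/dual involution organizes the cancellation is right in spirit, but the actual pairing is a separate, case-dependent involution compatible with duality rather than duality itself.
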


\begin{proof}  This proof is similar to that of \cite[Theorem 4.5]{sinha}.

We treat the case of the right $\op A$-action and leave the case of the left $\op A$-action, which is highly analogous, to the reader. Recall Notation \ref{notn:remainder}:  for all $i\in [1,ml]$, we let  $a_{i}\in [1,l]$ and $r_{i}\in [1,m]$ be the unique integers such that
$$i= (a_{i}-1)m + r_{i}.$$ 

Each component of the right action is of the form
 \begin{equation}\label{eqn:mult}
\gamma_{m}\Phi_{S^{n-1}}(\op B)(k) \times\big( \op A(l_{1})\times \cdots \times \op A(l_{k})\big) \to \gamma_{m}\Phi_{S^{n-1}}(\op B)(l)
\end{equation}
where $l\geq 1$, $1\leq k\leq l$, and $\vec l \in I_{k,l}$.
The map (\ref{eqn:mult}) sends $(f;+,..., +)$ to the map
\begin{align*}
F:=f\circ &\rho^{(m)} _{k,\vec l}:\gamma_{m}\op B(l) \to S^{n-1}\\
&(i,j)\mapsto \begin{cases}f\big((s-1)m+r,(t-1)m+r\big)&:\vec l^{s}<a_{i}\leq \vec l^{s+1}\leq\vec l^{t}<a_{j}\leq \vec l^{t+1},\\
&\;\; r_{i}=r_{j}=r\\ 
\mathfrak s&:\text{else,}\end{cases}
\end{align*}
where $\mathfrak s$ denotes the south pole of $S^{n-1}$.  

We must show  that if $f$ is three-dependent and four-consistent, then $F$ is as well.  
\medskip

\noindent\emph{$F$ is three-dependent:} Let $1\leq i_{1}<i_{2}<i_{3}\leq lm$.  
\begin{itemize}
\item If there exists $s\in [1,k]$, such that $\vec l ^{s}<  a_{i_{1}}<a_{i_{2}}<a_{i_{3}}\leq \vec l^{s+1}$, then 
$$F(i_{j}, i_{j+1})= \mathfrak s,$$
for all $1\leq j\leq 3$, where we set $i_{4}=i_{1}$.  It follows that 
$$F(i_{1}, i_{2})- F(i_{2},i_{3})+0\cdot F(i_{3}, i_{1})=0.$$

\medskip 

\item If there exist $1\leq s_{1}<s_{2}<s_{3}\leq k$ such that $\vec l ^{s_{j}}<  a_{i_{j}}\leq \vec l^{s_{j}+1}$ for all $1\leq j \leq 3$, then 
$$F(i_{j}, i_{j+1})= \begin{cases}f\big((s_{j}-1)m+r,(s_{j+1}-1)m+r\big)&: r_{i_{j}}=r_{i_{j+1}}=r\\ 
\mathfrak s&:\text{else,}\end{cases}$$

\noindent for all $1\leq j\leq 3$, where we set $i_{4}=i_{1}$.  If $r_{i_{j}}=r$ for all $1\leq j\leq 3$, the three-dependency of $f$ guarantees the existence of $b_{1}, b_{2}, b_{3}\in \R_{\geq 0}$, not all 0, such that 
$$b_{1}F(i_{1}, i_{2})+ b_{2}F(i_{2},i_{3})+ b_{3}F(i_{3}, i_{1})=0.$$
If at least two of the $r_{i_{j}}$ are different, then at least two of the $F(i_{j},i_{j+1})$ are equal to $\pm \mathfrak s$, so that we can set the corresponding $b_{j}$'s to be $1$ or $-1$, depending on the sign of $F(i_{j},i_{j+1})$, and the remaining coefficient to be 0.

\medskip 

\item If there exist $s,s'\in [1,k]$ such that 
$$\vec l ^{s}<  a_{i_{1}}<a_{i_{2}}\leq \vec l^{s+1}\leq \vec l ^{s'}<  a_{i_{3}}\leq \vec l^{s'+1},$$ 
then
$$F(i_{1}, i_{2})= \mathfrak s,$$ 
while
$$F(i_{2}, i_{3})= \begin{cases}f\big((s-1)m+r,(s'-1)m+r\big)&:r_{i_{2}}=r_{i_{3}}=r\\ 
\mathfrak s&:\text{else,}\end{cases}$$
and
$$F(i_{3}, i_{1})= \begin{cases}-f\big((s-1)m+r,(s'-1)m+r\big)&:r_{i_{3}}=r_{i_{1}}=r\\ 
\mathfrak s&:\text{else,}\end{cases}$$
whence
$$0\cdot F(i_{1}, i_{2})+ 1\cdot F(i_{2}, i_{3}) + 1 \cdot F(i_{3}, i_{1})=0$$
if $r_{1}=r_{2}=r_{3}=r$.  If at least two of the $r_{i_{j}}$ are different, then we are again in the situation where at least two of the $F(i_{j},i_{j+1})$ are equal to $\pm \mathfrak s$.
\end{itemize}
The one remaining case to consider, where there exist $1\leq s<s'\leq k$ such that $\vec l ^{s}<  p_{i_{1}}\leq \vec l^{s+1}$ and $\vec l ^{s'}<p_{i_{2}}<  p_{i_{3}}\leq \vec l^{s'+1}$, is essentially identical to the last case above.
\medskip

\noindent\emph{$F$ is four-consistent:}
Consider a set $S=\{i_{1},i_{2},i_{3}, i_{4}\}\subset \{1,...,l\}$, where $i_{j}<i_{j+1}$ if $j<j+1$. For each $j$, let $s_{j}\in [1,k]$ be the integer such that $\vec n^{s_{j}}< i_{j}\leq \vec n^{s_{j}+1}$.  Let $\pi_{\sigma}$ denotes the summand  of the lefthand side of (\ref{eqn:4consist}) corresponding to the chain $\vec\imath_{\sigma}$.
\begin{itemize}
\item If $s=s_{j}$ for all $j$, then $F(i_{j}, i_{j+1})=\mathfrak s$ for all $j$, whence all twelve terms of the lefthand side of (\ref{eqn:4consist}) have the same absolute value, but alternating signs, and therefore sum to 0 as desired.
\medskip 

\item If all of the $s_{j}$ are distinct, and $r=r_{i_{j}}$ for all $j$, then, as in the second case above, the four-consistency of $f$  implies that equation (\ref{eqn:4consist}) holds for $S$.  This is the only case in which the four-consistency of $f$ is necessary to showing that (\ref{eqn:4consist}) holds.  

If all of the $s_{j}$ are distinct, and $r_{i_{1}}=r_{i_{2}}\not= r_{i_{3}}=r_{i_{4}}$, then the summands on the lefthand side of  (\ref{eqn:4consist}) cancel according to the following pairing.
\begin{itemize}
\item $\pi_{\id}$ cancels $\pi_{(34)}$ (and, dually, $\pi_{(1243)}$ cancels $\pi_{(123)}$);
\item $\pi_{(23)}$ cancels $\pi_{(243)}$ (and, dually, $\pi_{(12)(34)}$ cancels $\pi_{(12)}$);
\item $\pi_{(234))}$ cancels $\pi_{(24)}$ (and, dually, $\pi_{(13)}$ cancels $\pi_{(132)}$).
\end{itemize}
Other possible relations among the $r_{i_{j}}$ lead to similar patterns of cancellation.
\medskip

\item The cases remaining to treat are the following.
\begin{itemize}
\item $s_{1}=s_{2}=s_{3}<s_{4}$ (and similarly $s_{1}<s_{2}=s_{3}=s_{4}$),
\item $s_{1}=s_{2}<s_{3}=s_{4}$
\item $s_{1}=s_{2}<s_{3}<s_{4}$ (and similarly $s_{1}<s_{2}=s_{3}<s_{4}$ and $s_{1}<s_{2}<s_{3}=s_{4}$)
\end{itemize}
In each case equation (\ref{eqn:4consist}) holds because the 12 terms in the sum cancel pairwise. For example, if $s_{1}=s_{2}=s_{3}<s_{4}$ and $r=r_{i_{j}}$ for all $j$, then the summands on the lefthand side of  (\ref{eqn:4consist}) cancel according to the following pairing.
\begin{itemize}
\item $\pi_{\id}$ cancels $\pi_{(13))}$ (and, dually, $\pi_{(1243)}$ cancels $\pi_{(234)}$);
\item $\pi_{(34)}$ cancels $\pi_{(12)(34)}$ (and, dually, $\pi_{(123)}$ cancels $\pi_{(23)}$);
\item $\pi_{(243)}$ cancels $\pi_{(24)}$ (and, dually, $\pi_{(12)}$ cancels $\pi_{(132)}$).
\end{itemize}
The other cases work out similiarly.
\end{itemize}
\end{proof}

\begin{cor}\label{cor:abimod-map}  The morphism of $\op A$-bimodules 
$$\alpha_{m}^{S^{n-1}}:\gamma_{m}\Phi_{S^{n-1}}(\op B)\to \Phi_{S^{n-1}}(\op B)^{\times m}$$
restricts and corestricts to a morphism of $\op A$-bimodules 
$$\alpha_{m}:\gamma_{m}\op K_{n}\to \op K_{n}^{\times m}.$$
\end{cor}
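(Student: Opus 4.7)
The plan is to show that for each $f \in \gamma_m\op K_n(n)$, namely a three-dependent and four-consistent pointed map $f\colon\op B(mn)\to S^{n-1}$, every component of its image under $\alpha_m^{S^{n-1}}$ lies in $\op K_n(n)$. Then, since the $\op A$-bimodule structures on $\gamma_m\op K_n$ and $\op K_n^{\times m}$ are by definition the restrictions of those on $\gamma_m\Phi_{S^{n-1}}(\op B)$ and $\Phi_{S^{n-1}}(\op B)^{\times m}$, respectively, the bimodule map property will be automatic.

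By Proposition \ref{prop:opstructure} (dualized to $\cat{Top}_*$), the $r$-th component of $\alpha_m^{S^{n-1}}$ sends $f$ to the map $f_r\colon \op B(n)\to S^{n-1}$ given by
\[
f_r(i,j)=f\bigl((i-1)m+r,\,(j-1)m+r\bigr).
\]
Equivalently, $f_r$ is the pullback of $f$ along the strictly increasing, basepoint-preserving map $\sigma_r\colon\op B(n)\hookrightarrow \op B(mn)$, $(i,j)\mapsto((i-1)m+r,(j-1)m+r)$. The key observation is that both defining conditions of $\op K_n$ are hereditary along such index inclusions: they are quantified over subsets of $\{1,\dots,k\}$ of cardinality three or four, and $\sigma_r$ carries such subsets in $\{1,\dots,n\}$ to subsets of the same cardinality in $\{1,\dots,mn\}$.

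More explicitly, for the first step I would fix a three-element subset $\{i_1,i_2,i_3\}\subset\{1,\dots,n\}$, apply the three-dependency of $f$ to the subset $\{(i_1-1)m+r,(i_2-1)m+r,(i_3-1)m+r\}$, and note that the resulting linear relation is exactly $b_1 f_r(i_1,i_2)+b_2 f_r(i_2,i_3)+b_3 f_r(i_3,i_1)=0$. For the second step I would fix a four-element subset $T=\{i_1,i_2,i_3,i_4\}\subset\{1,\dots,n\}$ and $v,w\in S^{n-1}$, and apply four-consistency of $f$ to the subset $\sigma_r(T)\subset\{1,\dots,mn\}$; because the bijection $\mathfrak C(T)\to\mathfrak C(\sigma_r(T))$ induced by $\sigma_r$ identifies the summands of the two instances of equation (\ref{eqn:4consist}), one for $f$ and one for $f_r$, the four-consistency identity for $f_r$ at $T$ follows at once.

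I do not expect any serious obstacle: the entire content of the argument is that $\alpha_m^{S^{n-1}}$ is a pullback along order-preserving maps of index sets, and the Kontsevich conditions are defined subset-by-subset. The closest thing to a subtlety is checking basepoint preservation of $f_r$, which holds because $\sigma_r$ is basepoint-preserving (the disjoint basepoint of $\op B(n)$ maps to the disjoint basepoint of $\op B(mn)$). Once the restriction is established, the corestriction of $\alpha_m^{S^{n-1}}$ to $\op K_n^{\times m}$ is a morphism in $\bimod{\op A}$ because the inclusions $\gamma_m\op K_n\hookrightarrow \gamma_m\Phi_{S^{n-1}}(\op B)$ and $\op K_n^{\times m}\hookrightarrow \Phi_{S^{n-1}}(\op B)^{\times m}$ are bimodule maps by construction.
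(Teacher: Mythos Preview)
Your argument is correct and is precisely the justification the paper leaves implicit: the corollary is stated without proof, as an immediate consequence of the preceding proposition and the definitions. Your key observation---that three-dependency and four-consistency are hereditary along the order-preserving index inclusions $\sigma_r$, so that each component $f_r$ of $\alpha_m^{S^{n-1}}(f)$ inherits the Kontsevich conditions from $f$---is exactly what is needed, and your remark that the bimodule compatibility is automatic once the set-theoretic restriction/corestriction is in place is right on the mark.
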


\begin{rmk}  It is possible to show that $\gamma_{m}\op B$ also admits the structure of a nonunital operad, with respect to which $\alpha_{m}$ is multiplicative, and thus that $\gamma_{m}\Phi_{X}(\op B)$ admits the structure of a nonunital operad as well.  On the other hand, one can easily construct an explicit example showing that the multiplicative structure on $\gamma_{m}\Phi_{S^{n-1}}(\op B)$ cannot restrict and corestrict to $\gamma_{m}\op K_{n}$, which the configuration space argument in Remark \ref{rmk:no-operad} makes clear must be impossible.
\end{rmk}

We are now ready to prove that $\gamma_{m}\op K_{n}$ provides a cosimplicial model of $\L_{m,n}$.

\begin{proof}[Proof of Theorem \ref{thm:divpowmodel}] It is not hard to check that the zigzag of cosimplicial continuous maps in Remark \ref{rmk:zigzag} can be generalized to
$$C_{\del}\langle\bullet\cdot m, I^{n}\rangle \xleftarrow {} C_{\del, \ve}\langle\bullet\cdot m, I^{n}\rangle \xrightarrow{} (\gamma_{m}\op K_{n})^{\bullet},$$ 
where the lefthand arrow is again the inclusion of a cosimplicial subspace.  Note that we use here that the coface maps in $\gamma_{m}\op K_{n}^{\bullet}$ are induced by the ``row-wise'' $\op A$-bimodule structure of $\op B$.

Straightforward generalizations of the arguments in \cite[\S 6]{sinha} show that both arrows induce weak equivalences after totalization. Theorem \ref{thm:munson-volic} then implies that $\widetilde{\operatorname{Tot}}( \gamma_{m}\op K_{n})^{\bullet}\simeq \L_{m,n}$, while it follows from Remark \ref{rmk:proj} that totalization of the cosimplicial continuous map
$$\alpha_{m}^{\bullet}:\gamma_{m}\op K_{n}^{\bullet}\to (\op K_{n}^{\bullet})^{\times m}$$ 
induced by the morphism of $\op A$-bimodules of Corollary \ref{cor:abimod-map} is weakly equivalent to the projection of a string link onto the list of its components.
\end{proof}

\section{Fiber sequences}\label{sec:fiberseq}

We began this section by recalling the main theorem from \cite{dwyer-hess}, concerning the existence of a fiber sequence of derived mapping spaces associated to a morphism of monoids in a category endowed with appropriately compatible monoidal and model category structures.   We then apply this general existence result to the cases of interest in this article, culminating in the proof of Theorem \ref{thm:mainthm}.

Throughout this section, for any model category $\M$ and any objects $X$ and $Y$ in $\M$, we denote by $\maph_{\M}(X,Y)$ the simplicial derived mapping space, constructed  via hammock localization \cite {dwyer-kan} or, if $\M $ is a simplicial model category, via $\map_{\M} (X^{c}, Y^{f})$, where $\map_{\M}$ denotes the simplicial enrichment of $\M$, and $X^{c}$ and $Y^{f}$ are cofibrant and fibrant replacements, respectively.   Recall that derived mapping spaces are homotopy invariant, i.e., a pair of weak equivalences $X'\xrightarrow \simeq X$ and $Y\xrightarrow \simeq Y'$ in $\M$ induces a weak equivalence
$$\maph_{\M}(X,Y) \xrightarrow \simeq \maph_{\M}(X', Y').$$

To prove Theorem \ref{thm:mainthm}, we rely on the following result, relating derived mapping spaces of simplicial operads and their bimodules with those of their geometric realizations. 

\begin{prop}\label{prop:simp-top}\cite[Proposition 2.7]{dwyer-hess:longembed} For all  $\op O, \op O'\in \mathrm{Op}(\cat{sSet})$, 
and for all $\op O$-bimodules $\op M$ and $\op M'$,
$$  \maph_{\bimod{\op O}}(\op M, \op M')\simeq \maph _{\bimod{|\op O|}}\big(|\op M|, |\op M'|\big ).$$
\end{prop}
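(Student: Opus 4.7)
The plan is to lift the Quillen equivalence $|-| \dashv \mathrm{Sing}$ between $\sset$ and $\cat{Top}$ to a Quillen equivalence between the bimodule categories over $\op O$ and over $|\op O|$, and then to invoke the invariance of derived mapping spaces under Quillen equivalence. Since $|-|$ is strong monoidal with respect to cartesian products (taking $\cat{Top}$ to be $k$-spaces, so that the canonical map $|K \times L| \to |K| \times |L|$ is a homeomorphism) and $\mathrm{Sing}$ is therefore lax monoidal, the adjunction lifts aritywise to sequences, to operads, and finally to bimodules. Concretely, there is an adjunction
\begin{equation*}
|-|\colon \bimod{\op O}(\sset) \rightleftarrows \bimod{|\op O|}(\cat{Top}) \colon \widetilde{\mathrm{Sing}},
\end{equation*}
where $\widetilde{\mathrm{Sing}}(\op N)$ has underlying sequence $\mathrm{Sing}(\op N)$, with $\op O$-bimodule structure obtained by restricting the evident $\mathrm{Sing}|\op O|$-bimodule structure along the unit $\op O \to \mathrm{Sing}|\op O|$.

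Next, I would equip both sides with the transferred (projective) model structures from their underlying sequence categories, in which weak equivalences and fibrations are detected aritywise. In the nonsymmetric setting of this paper, these transferred structures exist without $\Sigma$-cofibrancy hypotheses, and the lifted adjunction is a Quillen pair because $\mathrm{Sing}$ preserves fibrations and trivial fibrations aritywise.

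To upgrade to a Quillen equivalence, I would verify the standard adjoint-equivalence criterion: for a cofibrant $\op M \in \bimod{\op O}(\sset)$ and a fibrant $\op N \in \bimod{|\op O|}(\cat{Top})$, a map $|\op M| \to \op N$ is a weak equivalence if and only if its transpose $\op M \to \widetilde{\mathrm{Sing}}(\op N)$ is a weak equivalence. Because weak equivalences in the projective model structures are levelwise, this reduces to the analogous statement for the underlying adjunction $|-| \dashv \mathrm{Sing}$ on $\sset \rightleftarrows \cat{Top}$, which is standard. A Quillen equivalence induces a Dwyer--Kan equivalence on hammock localizations, and hence weak equivalences on derived mapping spaces, yielding the claimed equivalence
\begin{equation*}
\maph_{\bimod{\op O}}(\op M, \op M') \simeq \maph_{\bimod{|\op O|}}\bigl(|\op M|, |\op M'|\bigr).
\end{equation*}

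The main obstacle is essentially bookkeeping rather than conceptual: one must (i) confirm that the transferred model structures behave well on both sides and are compatible with the lifted adjunction, and (ii) handle the change-of-base built into $\widetilde{\mathrm{Sing}}$, which relies on the fact that restriction along the levelwise weak equivalence of operads $\op O \to \mathrm{Sing}|\op O|$ is itself a Quillen equivalence of bimodule categories (a result that can in turn be checked by reduction to sequences). As an alternative to a model-categorical argument, one could work directly at the level of the Dwyer--Kan hammock localization, checking that $|-|$ takes hammocks in $\bimod{\op O}(\sset)$ to hammocks in $\bimod{|\op O|}(\cat{Top})$ and preserves the two-out-of-three class of weak equivalences.
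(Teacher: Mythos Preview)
The paper does not supply its own proof of this proposition: it is stated with a citation to \cite[Proposition 2.7]{dwyer-hess:longembed} and used as a black box. So there is no in-paper argument to compare against.

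That said, your outline is the standard route to such a comparison result and is essentially correct. The one point worth making explicit is why the underived realizations $|\op M|$ and $|\op M'|$, rather than realizations of cofibrant replacements, appear on the right-hand side: a Quillen equivalence gives $\maph_{\bimod{\op O}}(\op M,\op M')\simeq \maph_{\bimod{|\op O|}}(|\op M^{c}|,|\op M'^{c}|)$, and one then needs $|\op M^{c}|\to|\op M|$ and $|\op M'^{c}|\to|\op M'|$ to be weak equivalences. This holds because weak equivalences in the projective model structures are aritywise, and geometric realization preserves \emph{all} weak equivalences of simplicial sets (every simplicial set being cofibrant), hence $|{-}|$ preserves all weak equivalences of $\op O$-bimodules, not merely those between cofibrant objects. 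With that remark added, your argument is complete; the bookkeeping items you flag (existence of the transferred structures in the nonsymmetric setting, and the change-of-base along $\op O\to\mathrm{Sing}|\op O|$) are routine.
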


\begin{rmk}  Geometric realization of a simplicial derived mapping space produces a model for a topological derived mapping space, which we also denote, somewhat abusively, by $\maph$ in the statement of Theorem \ref{thm:mainthm}.
\end{rmk}

The main theorem of \cite{dwyer-hess} is the following existence result.

\begin{thm}\cite[Theorem 3.11]{dwyer-hess}\label{thm:dh} Let $\cat M$ be a category endowed with the structures of both a monoidal category and a model category.  Let $\vp: A\to B$ be a morphism of monoids in $\M$.   If Axioms I-VI of \cite[\S 3]{dwyer-hess} are satisfied for $\M$ and the morphism $\vp$, then the categories  $\cat {Mon}$ and $\bimod A$ of monoids and of $A$-bimodules in $\cat M$ admit model category structures induced from that of $\cat M$, and  there is a fiber sequence of simplicial sets
$$\Om \maph_{\cat{Mon}}(A,B)_{\vp}\to \maph_{\bimod A}(A,\vp^{*}B) \xrightarrow {\eta^{*}}\maph_{\M}(I, B),$$
where $\vp^{*}B$ denotes the $A$-bimodule with underlying object $B$ and bimodule structure determined by $\vp$, $I$ is the unit object of $\M$, and the map $\eta^{*}$ is given by precomposition with the unit map of $A$.
\end{thm}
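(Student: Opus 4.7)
My plan is to identify the homotopy fiber of $\eta^{*}$ over $\eta_{B}$ with $\Om\maph_{\cat{Mon}}(A,B)_{\vp}$, thereby producing the desired three-term fiber sequence. I would begin by invoking Axioms I-VI to transfer model structures to $\cat{Mon}$ and $\bimod A$, so that the forgetful functors to $\M$ detect fibrations and weak equivalences, and so that a cofibrant replacement $A^{c}\to A$ in $\cat{Mon}$ can simultaneously be chosen cofibrant as an $A^{c}$-bimodule. With these preliminaries in place, all three derived mapping spaces can be modeled by honest simplicial hom-spaces and $\eta^{*}$ becomes an actual Kan fibration.

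The core technical tool is the bar resolution $B(A^{c},A^{c},A^{c})\to A^{c}$, which is a cofibrant replacement of $A^{c}$ in $\bimod{A^{c}}$. This presents $\maph_{\bimod A}(A,\vp^{*}B)$ as the totalization of the cosimplicial space $[n]\mapsto\map_{\M}((A^{c})^{\otimes n},B)$, with cofaces built from the monoid structures of $A^{c}$ and $B$. The cosimplicial degree-$0$ piece is exactly $\map_{\M}(I,B)$, and under this identification $\eta^{*}$ is the canonical evaluation map from the totalization onto the $0$-th cosimplicial level. The homotopy fiber of $\eta^{*}$ over $\eta_{B}$ is therefore the totalization of the pointed cosimplicial space obtained by pinning the $0$-level at $\eta_{B}$, a kind of relative totalization.

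In parallel, $\maph_{\cat{Mon}}(A,B)_{\vp}$ admits a cosimplicial presentation via a monoid-valued resolution of $A^{c}$; a monoid map is exactly a bimodule map that additionally preserves the unit strictly, which corresponds cosimplicially to fixing both the degree-$0$ datum and the codegeneracies that involve the unit. Comparing the two cosimplicial models, pinning the $0$-level at $\eta_{B}$ of the bar cosimplicial diagram matches the \emph{looped} data of the monoid cosimplicial diagram; the extra loop arises because strict unit preservation of monoid maps provides one more level of coherence than the mere pointing condition on bimodule maps imposes. Equivalently, one can formulate this as a homotopy pullback square with $\ast\to\maph_{\cat{Mon}}(A,B)_{\vp}$ on one side and $\eta^{*}$ on the other, whose homotopy pullback reproduces $\maph_{\bimod A}(A,\vp^{*}B)$ and whose diagonal fiber is $\Om\maph_{\cat{Mon}}(A,B)_{\vp}$.

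The main obstacle, I expect, is making this loop-space identification precise while controlling the cofibrancy and fibrancy hypotheses needed for strict totalizations and strict fibers to model their homotopy-invariant analogues. In particular, one must check that Axioms I-VI are strong enough to make the relevant cosimplicial objects Reedy cofibrant and $\eta^{*}$ a Reedy fibration, so that the identification of the homotopy fiber with the loop space is genuine and not merely a statement about long exact sequences of homotopy groups. A useful secondary viewpoint may be to reinterpret the whole setup via the adjunction between monoids under $A$ and augmented $A$-bimodules, so that the loop is read off directly from the comparison of the two bar-style resolutions.
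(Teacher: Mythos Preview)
The paper does not prove this theorem at all: it is quoted verbatim as \cite[Theorem 3.11]{dwyer-hess} and then immediately applied (via Corollary~\ref{cor:bimod-fiberseq}) to derive Theorem~\ref{thm:mainthm}. There is therefore no proof in the present paper against which to compare your proposal; the argument lives entirely in the cited reference.

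That said, your sketch is in the right spirit for how such a result is typically established---transferring model structures via the axioms, resolving $A$ by a two-sided bar construction to compute $\maph_{\bimod A}(A,\vp^{*}B)$ cosimplicially, and then reading off the fiber of $\eta^{*}$. The genuinely delicate step, which you flag yourself, is the identification of that fiber with the \emph{loop} space of $\maph_{\cat{Mon}}(A,B)$ rather than with the monoid mapping space itself. Your explanation (``strict unit preservation \dots provides one more level of coherence'') is suggestive but not a proof; making this precise is exactly where the work of \cite{dwyer-hess} is concentrated, and it requires more than a comparison of cosimplicial degrees---one needs an explicit homotopy-theoretic comparison between the free-monoid and free-bimodule resolutions, together with the axioms ensuring the relevant Reedy conditions. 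So your outline is plausible but the crucial loop identification remains a gap as written.
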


An important application of Theorem  \ref{thm:dh} is elaborated in section 7 of \cite{dwyer-hess}, where Axioms I-VI of \cite[\S 3]{dwyer-hess} are verified for any morphism of monoids, when $\M =\mod{\op A}$, the category of right $\op A$-modules in $\seq(\cat{sSet})$, endowed with the graded monoidal structure. Since a left $\op A$-module structure on a object  $\op X$ in $\seq(\cat{sSet})$ is the same as a graded multiplicative structure on $\op X$, the category of monoids in $\mod{\op A}$ is isomorphic to the category $\bimod{\op A}$ of $\op A$-bimodules.  The unit for the graded monoidal structure on $\mod{\op A}$ is the sequence $\op I$ that is empty in all positive arities and a singleton in arity 0.

\begin{cor}\cite{dwyer-hess}\label{cor:bimod-fiberseq} For any morphism $\vp: \op X \to \op Y$ of $\op A$-bimodules, there is a fiber sequence of simplicial sets
$$\Om \maph_{\bimod{\op A}}(\op X,\op Y)_{\vp}\to \maph_{\bimod {\op X}^{\mathrm gr}}(\op X,\vp^{*}\op Y) \xrightarrow {\eta^{*}}\maph_{\seq(\cat{sSet})}(\op I, \op Y),$$
where $\bimod {\op X}^{\mathrm gr}$ denotes the category of graded $\op X$-bimodules in $\mod{\op A}$.
\end{cor}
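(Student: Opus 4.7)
The plan is to apply Theorem \ref{thm:dh} directly, with $\M = \mod{\op A}$ endowed with its graded monoidal structure, and with $\vp:\op X\to \op Y$ viewed as a morphism of monoids in $\M$. The discussion immediately preceding the corollary already supplies the dictionary required to translate the general conclusion into the stated form: monoids in $\mod{\op A}$ under the graded product are precisely $\op A$-bimodules, so $\cat{Mon}(\mod{\op A})\cong \bimod{\op A}$, while bimodules over the monoid $\op X$ in $\mod{\op A}$ are by definition the graded $\op X$-bimodules, i.e.,\ objects of $\bimod{\op X}^{\mathrm{gr}}$. Under this identification, the first two terms $\Om\maph_{\cat{Mon}}(A,B)_{\vp}$ and $\maph_{\bimod A}(A,\vp^{*}B)$ of the fiber sequence produced by Theorem \ref{thm:dh} become $\Om\maph_{\bimod{\op A}}(\op X,\op Y)_{\vp}$ and $\maph_{\bimod{\op X}^{\mathrm{gr}}}(\op X,\vp^{*}\op Y)$, respectively, with $\vp^{*}\op Y$ the $\op X$-bimodule whose underlying right $\op A$-module is $\op Y$ and whose left and right $\op X$-actions are the composites of $\vp$ with the ambient $\op A$-bimodule structure on $\op Y$.

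The hypotheses of Theorem \ref{thm:dh} -- Axioms I-VI of \cite[\S 3]{dwyer-hess} -- have been established for this specific $\M$ and for every morphism of monoids in \cite[\S 7]{dwyer-hess}, as recalled in the paragraph preceding the corollary. Hence no new verification is required at this step; it is a direct citation. It remains only to identify the third term $\maph_{\M}(\op I,\op Y)$ of the general fiber sequence with the mapping space $\maph_{\seq(\cat{sSet})}(\op I,\op Y)$ that appears in the statement. Since $\op A(0)$ is a singleton, the sequence $\op I$ that is a point in arity $0$ and empty elsewhere is the free right $\op A$-module on the corresponding sequence in $\seq(\cat{sSet})$, and so is cofibrant in $\mod{\op A}$; by the free-forgetful adjunction between $\seq(\cat{sSet})$ and $\mod{\op A}$, both derived mapping spaces compute the arity $0$ piece $\op Y(0)$ up to Kan fibrant replacement, yielding the required equivalence.

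The genuine technical content behind the corollary is the verification of Axioms I-VI in \cite[\S 7]{dwyer-hess}; granting that input, there is no real obstacle here -- the proof is just the unwrapping of definitions that shows Theorem \ref{thm:dh}, applied in this setting, says precisely what the corollary asserts. The only point requiring any care is confirming that the pullback bimodule structure $\vp^{*}\op Y$ produced by the general theorem agrees with the graded $\op X$-bimodule structure on $\op Y$ obtained from $\vp$, which is immediate once the dictionary between monoids in $\mod{\op A}$ and $\op A$-bimodules is unpacked.
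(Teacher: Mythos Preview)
Your proposal is correct and matches the paper's approach exactly: the corollary is stated in the paper without a separate proof, as the preceding paragraph already explains that it is the instance of Theorem~\ref{thm:dh} obtained by taking $\M=\mod{\op A}$ with its graded monoidal structure, for which Axioms I--VI are verified in \cite[\S 7]{dwyer-hess}, and by using the identification of monoids in $\mod{\op A}$ with $\op A$-bimodules. Your additional remark justifying the passage from $\maph_{\mod{\op A}}(\op I,\op Y)$ to $\maph_{\seq(\cat{sSet})}(\op I,\op Y)$ is a small elaboration the paper leaves implicit; the essential point, that both compute $\op Y(0)$ because $\op I$ is concentrated in arity~$0$, is sound.
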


Theorem \ref{thm:mainthm}  now follows from Corollary  \ref{cor:bimod-fiberseq}, together with Theorem \ref{thm:munson-volic}.
 
\begin{proof}[Proof of Theorem \ref{thm:mainthm}] Let $\vp_{m}: \op A \to \gamma_{m}\K_{n}$ denote the $\op A$-bimodule map that picks out the basepoint in each arity. Let $S_{\bullet}:\cat {Top}\to \sset$ denote the usual singular functor from topological spaces to simplicial sets.  Note that $S_{\bullet }$ sends the topological associative operad to the simplicial associative operad, for which we use the same notation, $\op A$.

Considering $S_{\bullet}\vp_{m}$ as a map of $\op A$-bimodules (equivalently, of graded monoids in $\mod {\op A}$), we can apply Corollary \ref{cor:bimod-fiberseq} and obtain a fiber sequence 
{\small $$\Om \maph_{\bimod{\op A}}(\op A,S_{\bullet}\gamma_{m}\K_{n})_{S_{\bullet}\vp_{m}}\to \maph_{\bimod {\op A}^{\mathrm gr}}(\op A,S_{\bullet}\gamma_{m}\K_{n}) \xrightarrow {}\maph_{\seq(\cat{sSet})}(\op I,S_{\bullet}\gamma_{m}\K_{n}).$$}
Since $\K_{n}(0)$ is a singleton, $\maph_{\seq}(\op I,S_{\bullet}\gamma_{m}\K_{n})$ is contractible and thus
$$\Om \maph_{\bimod{\op A}}(\op A,S_{\bullet}\gamma_{m}\K_{n})_{S_{\bullet}\vp_{m}}\simeq \maph_{\bimod {\op A}^{\mathrm gr}}(\op A,S_{\bullet}\gamma_{m}\K_{n}).$$
Moreover, by \cite [Lemmas 8.2 and 8.3]{dwyer-hess}, there is a weak homotopy equivalence
$$\maph_{\bimod {\op A}^{\mathrm gr}}(\op A,S_{\bullet}\gamma_{m}\K_{n}) \simeq \widetilde\Tot \big((S_{\bullet}\gamma_{m}\K_{n})^{\bullet}\big),$$
where the simplicial set on the righthand side is the totalization of the cosimplicial simplicial set  $(S_{\bullet}\gamma_{m}\K_{n})^{\bullet}$ determined by the $\op A$-bimodule structure of $S_{\bullet}\gamma_{m}\K_{n}$, as in \cite {mcclure-smith}.  Since $S_{\bullet}$ is a right Quillen functor, it follows that 
$$\maph_{\bimod {\op A}^{\mathrm gr}}(\op A,S_{\bullet}\gamma_{m}\K_{n}) \simeq  S_{\bullet}\widetilde\Tot (\gamma_{m}\K_{n}^{\bullet})\simeq S_{\bullet }\L_{m,n},$$
where the second equivalence is a consequence of Theorem \ref{thm:divpowmodel}, whence
\begin{equation}\label{eqn:loop}
S_{\bullet }\L_{m,n}\simeq \Om \maph_{\bimod{\op A}}(\op A,S_{\bullet}\gamma_{m}\K_{n})_{S_{\bullet}\vp_{m}}.
\end{equation}

Since $|\op A|=\op A$, it follows from Proposition \ref{prop:simp-top} and from the homotopy invariance of derived mapping spaces that 
$$\maph_{\bimod{\op A}}(\op A,S_{\bullet}\gamma_{m}\K_{n})\simeq \maph_{\bimod{\op A}}\big(\op A,|S_{\bullet}\gamma_{m}\K_{n}|\big)\simeq \maph_{\bimod{\op A}}(\op A,\gamma_{m}\K_{n}).$$
Applying geometric realization to (\ref{eqn:loop}) and to the equivalence above, we conclude that
$$\L_{m,n}\simeq \Om | \maph_{\bimod{\op A}}(\op A,\gamma_{m}\K_{n})_{\vp_{m}}|.$$
\end{proof}

 \bibliographystyle{amsplain}
\bibliography{longlinks}
\end{document}